\newcommand{\nc}{\newcommand}
\newenvironment{jaune}{\relax\color{Orchid}}{\hspace*{.5ex}\relax}
\newcommand{\bjr}{\begin{jaune}}
\newcommand{\jr}{\end{jaune}}
\nc{\bjn}{\begin{jaune}}
\nc{\ejn}{\end{jaune}}
\nc{\KLR}{Khovanov-Lauda-Rouquier algbera}
\nc{\KLRs}{Khovanov-Lauda-Rouquier algberas}
 \newtheorem{thm}{Theorem}[section]
 \newtheorem{co}[thm]{Corollary}
 \newtheorem{lem}[thm]{Lemma}
 \newtheorem{prop}[thm]{Proposition}
  \theoremstyle{definition}
 \newtheorem{defn}[thm]{Definition}
  \newtheorem{defn-thm}[thm]{Definition-Theorem}
 \theoremstyle{remark}
 \newtheorem{rem}[thm]{Remark}
\numberwithin{equation}{section}
\nc{\Lemma}{\begin{lem}}
\nc{\enlemma}{\end{lem}}
\nc{\Prop}{\begin{prop}}
\nc{\enprop}{\end{prop}}
\nc{\Th}{\begin{thm}}
\nc{\entheorem}{\end{thm}}
\nc{\Cor}{\begin{co}}
\nc{\encor}{\end{co}}
\newcommand{\BB}{\mathbb{B}}
\newcommand{\ZZ}{\mathbb{Z}}
\newcommand{\QQ}{\mathbb{Q}}
\newcommand{\g}{\mathfrak{g}}
\newcommand{\h}{\mathfrak{h}}
\nc{\re}{\mathrm{re}}
\nc{\im}{\mathrm{im}}
\nc{\be}{\begin{enumerate}}
\nc{\ee}{\end{enumerate}}
\newcommand{\bni}{\be[{\rm(i)}]}
\newcommand{\bna}{\be[{\rm(a)}]}
\nc{\bnum}{\bni}
\nc{\seteq}{\mathbin{:=}}
\nc{\cl}{\colon}
\nc{\ol}{\overline}
\nc{\ro}{{\rm(}}
\nc{\rf}{{\rm)}}
\nc{\Z}{\mathbb{Z}}
\nc{\el}{\ell}
\nc{\cor}{\mathbf{k}}
\nc{\eq}{\begin{eqnarray}}
\nc{\eneq}{\end{eqnarray}}
\nc{\eqn}{\begin{eqnarray*}}
\nc{\eneqn}{\end{eqnarray*}}
\newcommand{\soplus}{\mathop{\mbox{\normalsize$\bigoplus$}}\limits}
\nc{\Proof}{\begin{proof}}
\nc{\QED}{\end{proof}}
\nc{\hs}{\hspace*}
\nc{\noi}{\noindent}
\nc{\B}{\mathbf{B}}
\nc{\f}{\mspace{1mu}\mathbf{f}\mspace{1mu}}
\nc{\e}{\mspace{1mu}\mathbf{e}\mspace{1mu}}
\nc{\wt}{\mathrm{wt}}
\nc{\la}{\lambda}
\renewcommand{\L}{\mathrm{L}}
\nc{\tp}{\mathrm{top}}
\nc{\bL}{{\mspace{1mu}\mathfrak{l}\mspace{.0mu}}}
\nc{\ba}{\begin{array}}
\nc{\ea}{\end{array}}
\nc{\ib}{{\mspace{1mu}\mathbf{i}\mspace{.0mu}}}
\nc{\Le}{\preceq}
\nc{\Lt}{\prec}
\nc{\epito}{\twoheadrightarrow}
\newcommand{\isoto}[1][]{\mathop{\xrightarrow%
[{\raisebox{.3ex}[0ex][.3ex]{$\scriptstyle{#1}$}}]%
{{\raisebox{-.6ex}[0ex][-.6ex]{$\mspace{2mu}\sim\mspace{2mu}$}}}}}
\newcommand{\monoto}{\rightarrowtail}
\nc{\bl}{\bigl(}
\nc{\br}{\bigl)}
\nc{\del}{\ell^\vee}
\nc{\E}{\mathbf{E}}
\nc{\Hom}{\mathrm{Hom}}
\nc{\lan}{\langle}
\nc{\ran}{\rangle}
\newcommand{\scbul}{{\,\raise.4ex\hbox{$\scriptscriptstyle\bullet$}\,}}
\nc{\F}{\mathbf{F}}
\renewcommand{\set}[2]{\left\{#1 \mid #2\right\}}
\nc{\Q}{\QQ}
\nc{\vs}{\vspace*}
\title{Dual Perfect Bases and dual perfect graphs}
\author{ Byeong Hoon Kahng$^{1}$ }
\address{Department of Mathematical Sciences, Seoul National University, 599 Gwanak-Ro, Seoul 151-747, Korea }
\email{peter546@snu.ac.kr}
\author{Seok-Jin Kang$^{2}$}
\address{Department of Mathematical Sciences and Research Institute of Mathematics, Seoul National University, 599 Gwanak-Ro, Seoul 151-747, Korea}
\email{sjkang@snu.ac.kr}
\author{Masaki Kashiwara$^{3}$}
\address{Research Institute for Mathematical Sciences, Kyoto University, Kyoto 606-8502, Japan, and Department of Mathematical Sciences, Seoul National University, 599 Gwanak-Ro, Seoul 151-747, Korea}
\email{masaki@kurims.kyoto-u.ac.jp}
\author{Uhi Rinn Suh$^{1}$}
\address{Research Institute of Mathematics, Seoul National University, 599 Gwanak-Ro, Seoul 151-747, Korea}
\email{uhrisu1@snu.ac.kr}
\thanks{$^{1}$This work was supported by NRF Grant \# 2014-021261}
\thanks{$^{2}$This work was supported by NRF Grant \# 2014-021261 and by NRF Grant \# 2013-055408}
\thanks{$^{3}$This work was partially supported by Grant-in-Aid for
Scientific Research (B) 22340005, Japan Society for the Promotion of
Science.}
\dedicatory{Dedicated to Professor Boris Feigin on the occasion of
his sixtieth birthday}
\date{May 8, 2014}
\begin{document}

\begin{abstract}
We introduce the notion of dual perfect bases and dual perfect
graphs. We show that every integrable highest weight module
$V_q(\lambda)$ over a quantum generalized Kac-Moody algebra
$U_{q}(\g)$ has a dual perfect basis  and its dual perfect graph is
isomorphic to the crystal $B(\lambda)$. We also show that the
negative half $U_{q}^{-}(\g)$ has a dual perfect basis whose dual
perfect graph is isomorphic to the crystal $B(\infty)$. More
generally, we prove that all the dual perfect graphs of a given dual
perfect space are isomorphic as abstract crystals. Finally, we show
that the isomorphism classes of finitely generated graded projective
indecomposable modules over a Khovanov-Lauda-Rouquier algebra and
its cyclotomic quotients form  dual perfect bases for their
Grothendieck groups.
\end{abstract}

\maketitle


\setcounter{tocdepth}{-1}

\pagestyle{plain}

\section*{Introduction} \label{Sec:Intro}

In \cite{BK}, Berenstein and Kazhdan introduced the notion of {\em
perfect bases} for integrable highest weight modules over Kac-Moody
algebras. Using the properties of perfect bases, they obtained
Kashiwara's crystal structure without taking quantum deformation and
crystal limits.

Their work was extended by  Kang, Oh and Park to the integrable
highest weight modules $V_q(\lambda)$  ($\lambda\in P^+$) over a
quantum generalized Kac-Moody algebra $U_q(\g)$ and to the negative
half $U_q^-(\g)$ \cite{KOP1, KOP2}. It was shown that the upper
global bases (or dual canonical bases) ${\mathbb B}(\lambda)$ and
${\mathbb B}(\infty)$ are perfect bases of $V_{q}(\lambda)$ and
$U_q^{-}(\g)$, respectively. They also showed that all the crystals
arising from perfect bases of $V_q(\lambda)$ and $U_q^-(\g)$ are
isomorphic to the crystals $B(\lambda)$ and $B(\infty)$,
respectively.

The perfect basis theory plays an important role in the
categorification of quantum generalized Kac-Moody algebras. To be
more precise, let ${\mathbb A}=\Z[q, q^{-1}]$ and let $U_{\mathbb
A}^{-}(\g)$ be the integral form of $U_q^{-}(\g)$. Let $R$ be the
Khovanov-Lauda-Rouquier algebra associated with the Borcherds-Cartan
datum for  $U_q(\g)$. We denote by $\mathrm{Rep}(R)$ the category of
finite-dimensional graded $R$-modules and let $[\mathrm{Rep}(R)]$
denote its Grothendieck group. Then it was proved that
$[\mathrm{Rep}(R)]$ is isomorphic to $U_{\mathbb A}^{-}(\g)^{\vee}$,
the dual of $U_{\mathbb A}^{-}(\g)$ with respect to a non-degenerate
symmetric bilinear form on $U_{q}^{-}(\g)$ \cite{KOP2, KL1, KL2, R}.
Moreover, in \cite{KOP2} (see also \cite{LV}), it was shown that the
isomorphism classes of finite-dimensional graded irreducible
$R$-modules form a perfect basis ${\mathbb B}$ of 
$\QQ(q)\otimes_{\ZZ[q,q^{-1}]} [\mathrm{Rep}(R)]$. Thus ${\mathbb
B}$ has a crystal structure which is  isomorphic to $B(\infty)$.

Similarly, the cyclotomic Khovanov-Lauda-Rouquier algebra
$R^{\lambda}$ $(\lambda\in P^+)$ provides a categorification of
$V_q(\lambda)$ in the sense that $[\mathrm{Rep}(R^{\lambda})]$ is
isomorphic to $V_{\mathbb A}(\lambda)^{\vee}$, the dual of the
integral form $V_{\mathbb A}(\lambda)$ of $V_q(\lambda)$ (\cite{KK,
KKO}). As in the case with $U_{q}^{-}(\g)$, the isomorphism classes
of finite-dimensional graded irreducible  $R^\lambda$-modules form a
perfect basis  ${\mathbb B}^{\lambda}$ of
$\QQ(q)\otimes_{\ZZ[q,q^{-1}]}[\mathrm{Rep}(R^{\lambda})]$
and ${\mathbb B}^{\lambda}$ has a crystal structure  which is
isomorphic to $B(\lambda)$.

On the other hand, let $\mathrm{Proj}(R)$ (respectively,
$\mathrm{Proj}(R^{\lambda})$) be the the category of finitely
generated graded projective $R$-modules (respectively,
$R^{\lambda}$-modules). Then they also provide a categorification of
$U_q^{-}(\g)$ and $V_{q}(\lambda)$. That is, in \cite{KK, KKO, KL1,
KL2, R}, it was shown that $[\mathrm{Proj}(R)]$ (respectively,
$[\mathrm{Proj}(R^{\lambda})]$) is isomorphic to $U_{\mathbb
A}^{-}(\g)$ (respectively, $V_{\mathbb A}(\lambda)$). Note that the
isomorphism classes of finitely generated graded projective
indecomposable modules form a basis of $[\mathrm{Proj}(R)]$ and
$[\mathrm{Proj}(R^{\lambda})]$, respectively. To describe their
properties, we need the dual notion of perfect bases.

When the Cartan datum is symmetric, the isomorphism classes of
finite-dimensional graded irreducible modules correspond to
Kashiwara's upper global basis and the isomorphism classes of
finitely generated graded projective indecomposable modules
correspond to Kashiwara's lower global basis(or Lusztig's canonical
basis) under the categorification (\cite{KKP,R2,VV}). However, when
the Cartan datum is not symmetric, the above statement is not true
in general.

Thus it is an interesting problem to characterize the bases of
$U_{q}^{-}(\g)$ and $V_q(\lambda)$ that correspond to the
isomorphism classes of finite-dimensional graded irreducible modules
and finitely generated graded projective indecomposable modules over
$R$ and $R^{\lambda}$.

In this paper, as the first step toward this direction, we introduce
the notion of {\em dual perfect bases} and {\em dual perfect
graphs}. The typical examples of dual perfect bases are the lower
global bases ${\mathbf B}(\infty)$ and ${\mathbf B}(\lambda)$ of
$U_q^{-}(\g)$ and $V_q(\lambda)$, receptively. It is straightforward
to verify that that their dual perfect graphs are isomorphic to the
crystals $B(\infty)$ and $B(\lambda)$.

More generally, we show that all the dual perfect graphs of a given
dual perfect space are isomorphic as abstract crystals. Thus every
dual perfect graph of $U_q^{-}(\g)$ is isomorphic to $B(\infty)$ and
the same statement holds for $V_{q}(\lambda)$.

 Finally, we show that the dual basis of a perfect basis is a
dual perfect basis.  Therefore, the isomorphism classes of
finitely generated graded projective indecomposable modules form a
dual perfect basis of  $\QQ(q) \otimes_{\ZZ[q,
q^{-1}]}[\mathrm{Proj}(R)] $ and  $\QQ(q)
\otimes_{\ZZ[q, q^{-1}]}[\mathrm{Proj}(R^{\lambda})]$,
respectively.

\vskip 3mm

{\bf Acknowledgments.} \ The second author would like to express
his sincere gratitude to RIMS, Kyoto University for their warm
hospitality during his visit in March 2014.

\vskip 5mm

\section{Quantum generalized Kac-Moody algebras} \label{Sec:GKM}

We first recall the basic theory of quantum generalized Kac-Moody
algebras. Let $I$ be an index set. An integral square
matrix $A=(a_{ij})_{i,j\in I}$ is called a {\em symmetrizable
Borcherds-Cartan matrix} if (i) $a_{ii}=2$ or $a_{ii} \le 0$ for all
$i \in I$,  (ii) $a_{ij} \le 0$ for $i \neq j$,  (iii) $a_{ij}=0$ if
and only if $a_{ji}=0$, (iv) there is a diagonal matrix
$D=\mathrm{diag}(s_i\in{\ZZ}_{>0})_{i\in I}$ such that $DA$ is
symmetric.

An index $i$ is {\em real} if $a_{ii}=2$ and is {\em imaginary} if
$a_{ii} \le 0$. We write $I^{\mathrm{re}} \seteq\{ i \in I \mid a_{ii}=2
\}$ and $I^{\mathrm{im}}\seteq \{ i \in I \mid a_{ii} \le 0 \}.$ In this
paper, we assume that $$a_{ii} \in 2\ZZ\quad\text{for all $i \in I$.}$$

A quadruple $(A, P, \Pi, \Pi^\vee)$ consisting of

\vs{.5ex}
\hs{4ex}\parbox{75ex}{
\begin{enumerate}[(a)]

\item a symmetrizable Borcherds-Cartan matrix $A=(a_{ij})_{i,j \in I}$,

\item a free abelian group $P$, which is called the {\em weight lattice},

\item the set of simple roots $\Pi=\{\alpha_i \in P \mid i\in I\},$

\item the set of simple coroots
$\Pi^\vee=\{h_i \in P^\vee \mid i\in I\}\subset P^\vee\seteq \mathrm{Hom}(P, \ZZ)$
\end{enumerate}}

\vs{.5ex}
is called a {\em Borcherds-Cartan datum} if it satisfies the
following properties:

\vs{.5ex}
\hs{1ex}\parbox{75ex}{
\begin{enumerate}
\item  $\left<h_i, a_j\right>=a_{ij}$ for $i,j \in I,$
\item  for any $i,j\in I$, there is $\Lambda_i \in P$ such that $\left< h_j, \Lambda_i\right> =\delta_{ij}$,
\item $\Pi$ is a linearly independent set.
\end{enumerate}
}

\vs{.5ex}
The subset $P^+\seteq\{\lambda \in P\,\mid \,\lambda(h_i)\in \ZZ_{\geq 0},
i \in I\}\subset P$ is called the set of dominant integral weights.
We denote by $Q\seteq\bigoplus_{i\in I} \ZZ \alpha_i$ the root lattice
and denote by $Q^+=\sum_{i\in I} \ZZ_{\geq 0} \alpha_i$ the positive
root lattice. We also call $\h\seteq\QQ \otimes_{\ZZ}P^{\vee}$ the
Cartan subalgebra.

\vskip 3mm We use the notation
 \begin{equation}
[n]_i\seteq \frac{q_i^n-q_i^{-n}}{q_i-q_i^{-1}},\ \
[m]_i !\seteq [m]_i[m-1]_i \cdots [1]_i, \ \
\left[ \begin{matrix} m_1 \\ m_2 \end{matrix} \right]_i\seteq \frac{[m_1]_i !}{[m_2]_i ![m_1-m_2]_i !},
\end{equation}
where $q_i= q^{s_i}$ for $i\in I$ and $[0]_q!\seteq1$.

\begin{defn}

The {\em quantum generalized Kac-Moody algebra} $U_q(\g)$
associated with a Borcherds-Cartan datum $(A, P,  \Pi, \Pi^{\vee})$
is the associative algebra over $\QQ(q)$ with unity generated by
$e_i, f_i$ $(i \in I)$ and $q^h$ $(h \in P^\vee)$ subject to the
following defining relations:
\begin{enumerate}
\item $q^0=1$,\,  $q^hq^{h'}=q^{h+h'},$
\item $q^he_iq^{-h}=q^{\alpha_i(h)}e_i$,\, $q^hf_iq^{-h}=q^{-\alpha_i(h)}f_i,$
\item $e_if_j-f_je_i=\delta_{ij}\dfrac{K_i-K_i^{-1}}{q_i-q_{-1}},$ where  $K_i=q^{s_ih_i},$
\item $\displaystyle \sum^{1-a_{ij}}_{k=0} (-1)^k \left[ \begin{matrix}1-a_{ij} \\ k \end{matrix} \right]_ie_i^{1-a_{ij}-k}e_je_i^k=0$ if $a_{ii}=2$ and $i\neq j,$
\item $\displaystyle \sum_{k=0}^{1-a_{ij}} (-1)^k \left[ \begin{matrix} 1-a_{ij} \\ k \end{matrix} \right]_if_i^{1-a_{ij}-k}f_jf_i^k=0 $ if $a_{ii}=2$ and $i\neq j,$
\item $e_ie_j-e_je_i=0$,\ $f_if_j-f_jf_i=0$ if $a_{ij}=0.$
\end{enumerate}
\end{defn}

For $k\in \mathbb{Z}_{>0}$, let
\begin{align*}
e_i^{(k)}=
 \begin{cases}
 \dfrac{e_i^k}{[k]_i!} \ & \text{if} \  i \in I^{\mathrm{re}}, \\
 e_i^k \ & \text{if} \ i \in I^{\mathrm{im}},
\end{cases} \qquad
f_i^{(k)}=
 \begin{cases}
 \dfrac{f_i^k}{[k]_i!} \ & \text{if} \  i \in I^{\mathrm{re}}, \\
 f_i^k \ & \text{if} \ i \in I^{\mathrm{im}}.
\end{cases}
\end{align*}

Let $U_q^-(\g)$ and $U_q^+(\g)$ be subalgebras of $U_q(\g)$
generated by $f_i$'s  and $e_i$'s $(i\in I)$, respectively, and let
$U^0_q$ be the  subalgebra of $U_q(\g)$ generated by $q^h$'s $(h \in
P^\vee)$. Then the algebra $U_q(\g)$ has the triangular
decomposition:
\[ U_q(\g)= U_q^-(\g)\otimes U_q^0(\g)\otimes U_q^+(\g).\]

Fix $i \in I$. For any $P\in U_q^-(\g)$, there exist unique elements $Q,R \in U_q^-(\g)$ such that
\begin{equation*}
e_iP-Pe_i=\frac{K_iQ-K_i^{-1}R}{q_i-q_i^{-1}}.
\end{equation*}
We define the endomorphisms $e'_i,e''_i\cl U_q^-(\g) \to U_q^-(\g)$ by
\begin{equation*}
e'_i(P)=R, \ \ e''_i(P)=Q.
\end{equation*}
Then we have the following commutation relations:
\begin{equation*} \label{e'}
e'_if_j=\delta_{ij}+q_i^{-a_{ij}}f_je'_i \ \ \text{for} \ i, j \in
I.
\end{equation*}

As we can see in \cite{JKK, K2}, there exists a unique
non-degenerate symmetric bilinear form $(\  , \ )$ on $U_q^-(\g)$
satisfying
\begin{equation} \label{Eqn:blf-infty}
(1, 1) =1, \quad  (f_i P , Q)=(P,e_i' Q) \quad \text{for all} \ \ P, Q
\in U^-_q(\g).
 \end{equation}

\vskip 3mm

We now turn to the representation theory of $U_q(\g)$.
 A $U_{q}(\g)$-module $V$ is called a {\em highest weight module}
with {\em highest weight} $\lambda$ if
 there exists a nonzero vector $v_{\lambda} \in V$ such that
\[(1)\ e_i v_\lambda=0 \text{ for } i\in I, \ \ (2)\ q^h v_\lambda= q^{\lambda(h)} v_\lambda\ \text{ for } h \in P^{\vee},  \ \ (3)\ U_q(\g) v_\lambda=V.  \]
\noindent The vector $v_{\lambda}$ is called  a  highest weight
vector. For each $\lambda \in P$, there exists a unique irreducible
highest weight module $V_q(\lambda)$  up to an isomorphism.

Consider the anti-involution $\phi$ on $U_q(\g)$ given by
$$q^h \mapsto q^h, \quad e_i \mapsto f_i, \quad f_i \mapsto e_i \ \
\text{for} \ \ i \in I, \, h \in P^{\vee}.$$ Then there exists a
unique non-degenerate symmetric bilinear form $(\ , \ )$ on
$V_q(\lambda)$ given by
\begin{equation}\label{eq:bilinear form on V(lambda)}
(v_{\lambda}, v_{\lambda})=1, \quad (P u, v) = (u, \phi(P)v) \quad
\text{for all $P \in U_q(\g)$ and $u, v \in V_q(\lambda)$.}
\end{equation}

\vskip 3mm

\begin{defn}
The {\em category ${\mathcal O}_{\mathrm{int}}$} consists of
$U_q({\g})$-modules $V$ satisfying the following properties:
\be[{\rm(a)}]
\item  $V$ has a weight space decomposition,
i.e.,  $V= \displaystyle\bigoplus_{\mu \in P} V_\mu$,
where $$V_\mu=\{ v \in V \mid \text{$q^hv=q^{\mu(h)}v$ for all $h \in P^{\vee}$} \},$$
\item there exist a finite number of weights $\lambda_1,\cdots, \lambda_t \in P$ such that
\begin{align*}
\mathrm{wt}(V)\seteq\{ \mu \in P \,\mid\, V_\mu \neq 0\} \subset
\bigcup_{j=1}^t(\lambda_j-Q^+),
\end{align*}
\item if $a_{ii}=2$, then the action of $f_i$ on $V$ is locally nilpotent,
\item if $a_{ii}\leq0$, then $\mu(h_i)\in{\ZZ}_{\geq 0}$ for every $\mu \in \mathrm{wt}(V),$
\item if $a_{ii}\leq0$ and $\mu(h_i)=0$, then $f_i (V_\mu)=0,$
\item if $a_{ii}\leq0$ and $\mu(h_i)=-a_{ii}$, then $e_i(V_\mu)=0$.
\end {enumerate}

\end{defn}

 The following
proposition was proved in \cite{JKK}.

\begin{prop} [\cite{JKK}]\hfill
\bni
\item The category ${\mathcal O}_{\mathrm{int}}$ is semisimple.

\item If $\lambda \in P^{+}$, then $V_q(\lambda)$
is a simple object of ${\mathcal O}_{\mathrm{int}}.$

\item Every simple object of the category ${\mathcal
O}_{\mathrm{int}}$ has the form $V_q(\lambda)$ for some $\lambda \in
P^{+}$.
\end{enumerate}
\end{prop}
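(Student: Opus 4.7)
The plan is to prove (ii) and (iii) first and then derive (i) via a complete reducibility argument.

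For (ii), I verify the defining conditions (a)--(f) of $\mathcal{O}_{\mathrm{int}}$ for $V_q(\lambda)$ with $\lambda \in P^+$ in turn. Condition (a) and the weight bound $\wt(V_q(\lambda)) \subseteq \lambda - Q^+$ in (b) follow from $V_q(\lambda) = U_q^-(\g)\, v_\lambda$ and the $Q$-grading of $U_q^-(\g)$. For local nilpotency of $f_i$ at a real index $i$ (condition (c)), the key step is the identity $f_i^{\lambda(h_i)+1} v_\lambda = 0$ in $V_q(\lambda)$, which is detected by the contravariant form \eqref{eq:bilinear form on V(lambda)}: the vector has zero self-pairing and therefore lies in the radical, which is trivial by irreducibility. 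Local nilpotency on the whole module then propagates by induction on the monomial length in $v = f_{j_1}\cdots f_{j_k} v_\lambda$, commuting high powers of $f_i$ past each factor via the quantum Serre relations. Conditions (d)--(f) for imaginary $i$ are handled analogously by testing the relevant vectors on $v_\lambda$ using the bilinear form \eqref{Eqn:blf-infty} on $U_q^-(\g)$ transported to $V_q(\lambda)$ via the highest weight vector.

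For (iii), let $V$ be a simple object of $\mathcal{O}_{\mathrm{int}}$. By (b), the partial order on $\wt(V)$ inherited from $Q^+$ admits a maximal element $\mu$. Pick a nonzero $v \in V_\mu$; maximality gives $e_i v = 0$ for all $i \in I$, so $V = U_q(\g) v$ is a highest weight module of highest weight $\mu$, and simplicity forces $V \cong V_q(\mu)$. Conditions (c) and (d) then yield $\mu(h_i) \in \ZZ_{\geq 0}$ for every $i$ (using the $U_q(\mathfrak{sl}_2)$-theory generated by $e_i,f_i,K_i$ at a real index to promote local nilpotency of $f_i$ on $v$ into non-negativity of $\mu(h_i)$), hence $\mu \in P^+$.

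For (i), I propose to establish complete reducibility by constructing, for any $V \in \mathcal{O}_{\mathrm{int}}$ and any submodule $W \subseteq V$, a direct complement. Pick a maximal weight $\mu$ of $\wt(V/W)$ and a lift $\tilde v \in V_\mu$ of a highest weight vector of $V/W$; the task is to modify $\tilde v$ within the weight spaces of $V$ lying strictly above $\mu$ so that $e_i \tilde v = 0$ for every $i \in I$. The main technical obstacle is producing this correction: one must solve a system of linear equations inside $V$ while respecting the integrability axioms (c)--(f) on $W$, and here the triangular decomposition of $U_q(\g)$ together with an induction on the partial order on $Q^+$ is essential; the imaginary case is the delicate one, and axioms (e) and (f) are precisely what make the linear system solvable. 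Once a corrected $\tilde v$ is found, $U_q(\g)\tilde v \cong V_q(\mu)$ by (ii) and (iii), furnishing a simple submodule of $V$ complementing (a refinement of) $W$, and a Zorn's lemma argument assembles these cyclic submodules into a direct sum decomposition of $V$.
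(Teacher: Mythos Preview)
The paper does not prove this proposition; it is stated with the preamble ``The following proposition was proved in \cite{JKK}'' and no argument is given. So there is nothing in the present paper to compare your proposal against.

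That said, a few comments on your sketch are in order. Your treatment of (ii) and (iii) is along standard lines and is essentially what one finds in \cite{JKK}; the verification of axioms (d)--(f) for imaginary $i$ does require a little more than ``testing on $v_\lambda$'', but the idea is right. The weak point is your argument for (i). Correcting a lift $\tilde v$ of a highest weight vector of $V/W$ so that $e_i\tilde v=0$ for all $i$ is not the usual route, and you have not indicated how the ``system of linear equations'' is actually solved---in particular, why the obstructions in $W$ to killing each $e_i\tilde v$ can be removed simultaneously. Moreover, even once such a $\tilde v$ is found, the submodule $U_q(\g)\tilde v$ is not a complement to $W$; it only maps onto a simple sub of $V/W$, so an additional induction (on something like the length or the weight support) is needed and should be spelled out. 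The standard proofs of semisimplicity in this setting proceed instead by showing that every short exact sequence $0\to V_q(\lambda)\to M\to V_q(\mu)\to 0$ in $\mathcal O_{\mathrm{int}}$ splits, using a Casimir-type argument when $\lambda\neq\mu$ and the nondegenerate contravariant form when $\lambda=\mu$; this bypasses the correction problem entirely.
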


\vskip 5mm

\section{Lower crystal bases} \label{Sec:LCB}

Let $V$ be a $U_q(\g)$-module in the category
$\mathcal{O}_{\mathrm{int}}$. 
It is straightforward to verify
that every vector $v \in V$ can be  uniquely written  as
\[ v= \sum_{k\ge0}f_i^{(k)} v_k,  \]
 where  $v_k\in\ker e_i$ with $v_k=0$ for $k\gg0$.
We define the {\em Kashiwara operators} $\widetilde{e}_i$ and
$\widetilde{f}_i$ $(i \in I)$ by
\[ \widetilde{e}_i v= \sum_{k=1}^{N} f_i^{(k-1)} v_k, \quad \widetilde{f}_i v= \sum_{k=0}^{N} f_i^{(k+1)} v_k.\]

Let
\[ \mathbb{A}_0\seteq \{f/g \in \QQ(q) \mid f, g \in \QQ[q],\ g(0) \neq 0\}.  \]

\begin{defn}
A free $\mathbb{A}_0$-submodule $L$ of $V$ is called a {\em lower
crystal lattice} of $V$ if it satisfies
\begin{enumerate}
\item $\QQ(q)\otimes_{\mathbb{A}_0} L =V,$

\item $L= \bigoplus_{\lambda \in P} L_\lambda$, where $L_{\lambda} = L\cap V_\lambda$,

\item $\widetilde{e}_i L \subset L$, $\widetilde{f}_i L \subset L$
for all $i \in I$.
\end{enumerate}
\end{defn}

\begin{defn}
Let $L$ be a lower crystal lattice of $V$ and let $B$ be a
$\QQ$-basis of $L \big/qL$. A pair $(L, B)$ is called a
{\em lower crystal basis} of $V$ if it satisfies
\begin{enumerate}
\item $B= \bigsqcup_{\lambda \in P} B_\lambda,$ where $B_\lambda= B \cap (L_\lambda / qL_\lambda)$,
\item $\widetilde{e}_i B \subset B \cup \{0\}, \ \  \widetilde{f}_i B \subset B \cup \{0\}$,
\item   for any $b, b'\in B$ and $i\in I$, we have
$\widetilde{f}_i(b)=b'$ if and only if $ \widetilde{e}_i b'= b$. 
\end{enumerate}
\end{defn}

We define the $I$-colored arrows on $B$ by setting $b\xrightarrow{i} b'$
if and only if $\widetilde{f_i}(b)=b'$. The $I$-colored  oriented  graph $(B,
\rightarrow)$ thus defined is called the {\em crystal graph} or
simply the {\em crystal} of $V$.

It is known that every $U_q(\g)$-module in
$\mathcal{O}_{\mathrm{int}}$ has a lower crystal basis.

\begin{prop}[\cite{JKK, K2}] \label{CBlambda}

Let $V_q(\lambda)$ $(\lambda \in P^{+})$ be the irreducible highest
weight module in the category ${\mathcal O}_{\mathrm{int}}$ with
highest vector $v_{\lambda}$. Then $V(\lambda)$ has a unique lower
crystal basis $(L(\lambda), B(\lambda))$, where
\begin{equation}\label{Eqn:Cry_Basis}
\begin{aligned}
& L(\lambda)= \mathbb{A}_0\text{-submodule generated by }\{ \widetilde{f}_{i_1} \cdots \widetilde{f}_{i_k} v_\lambda \mid\, i_s \in I, k\ge 0\}, \\
& B(\lambda)=  \{ \widetilde{f}_{i_1} \cdots \widetilde{f}_{i_k}
v_\lambda\  \mathrm{mod}\ q L(\lambda)  \mid \, i_s\in I, k \ge 0\}
\setminus \{0\}\subset L(\lambda) / q L(\lambda) .
 \end{aligned}
 \end{equation}
\end{prop}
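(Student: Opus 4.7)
The plan is to follow Kashiwara's grand-loop induction, adapted to the Borcherds (generalized Kac-Moody) setting as in \cite{JKK, K2}. The key point is that one cannot establish the existence of $(L(\lambda), B(\lambda))$ by naive induction on the weight: the axioms intertwine the $\widetilde{f}_i$-action (which builds the lattice) with the $\widetilde{e}_i$-action and with the basis property modulo $q$, so several assertions must be bundled into a single induction hypothesis and proved simultaneously.

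For uniqueness, I would observe that any lower crystal basis $(L,B)$ of $V_q(\lambda)$ contains a unique element $b_\lambda \in B_\lambda$ with $\widetilde{e}_i b_\lambda = 0$ for all $i$, which after rescaling is represented by $v_\lambda$. Axiom (3) of a lower crystal basis then forces every $\widetilde{f}_{i_1}\cdots \widetilde{f}_{i_k}(b_\lambda)$ to lie in $B \cup \{0\}$, and the corresponding vectors to generate $L$ over $\mathbb{A}_0$. Since $U_q(\g) v_\lambda = V_q(\lambda)$ and each divided power $f_i^{(k)}$ can be reconstructed from iterated $\widetilde{f}_i$'s up to invertible scalars in $\mathbb{A}_0$, these vectors already span $V_q(\lambda)$ over $\QQ(q)$, so $(L,B)$ must coincide with the pair defined in \eqref{Eqn:Cry_Basis}.

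For existence, I would run a simultaneous induction on the height $N = \mathrm{ht}(\lambda - \mu)$, carrying along at each level three assertions: that both $\widetilde{e}_i$ and $\widetilde{f}_i$ preserve $L(\lambda)$ weight-by-weight up to $N$; that the spanning set for $L(\lambda)_\mu$ descends to a $\QQ$-basis of $L(\lambda)_\mu/qL(\lambda)_\mu$; and that $\widetilde{f}_i b = b'$ is equivalent to $\widetilde{e}_i b' = b$ on the resulting set $B(\lambda)$ at this level. For a real index $i \in I^{\mathrm{re}}$, the analysis decomposes each weight space into $i$-strings and reduces essentially to Kashiwara's $U_q(\mathfrak{sl}_2)$-argument applied to the $e_i$-highest vectors. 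For an imaginary index $i \in I^{\mathrm{im}}$, the category $\mathcal{O}_{\mathrm{int}}$ axioms (d)--(f) play the decisive role: the constraints $\mu(h_i) \geq 0$, together with $f_i V_\mu = 0$ when $\mu(h_i) = 0$ and $e_i V_\mu = 0$ when $\mu(h_i) = -a_{ii}$, force the $f_i$-strings to be short and rigid, so that $\widetilde{f}_i$ essentially coincides with $f_i$ on the relevant subspaces (no divided powers intervene for imaginary roots) and the inductive step becomes nearly formal.

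The main obstacle is the tight coupling between the three inductive assertions: $\widetilde{e}_i$-invariance of $L(\lambda)$ is needed to make sense of the basis property, yet the basis property is what one uses to verify $\widetilde{e}_i$-invariance at the next height, which is precisely what forces the grand-loop format. In the GKM setting the coupling is untangled by the separate treatments of real and imaginary simple roots outlined above. Once the induction closes, the explicit generating families in \eqref{Eqn:Cry_Basis} identify $(L(\lambda), B(\lambda))$ as the unique lower crystal basis of $V_q(\lambda)$.
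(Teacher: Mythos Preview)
The paper does not give a proof of this proposition: it is quoted as a known result from \cite{JKK, K2}, with no argument supplied. Your sketch is a faithful outline of the approach in those references --- Kashiwara's grand-loop induction for existence, together with the separate treatment of real versus imaginary simple roots carried out in \cite{JKK} --- so there is nothing to compare against in the present paper; a couple of small imprecisions (it is axiom~(2), not~(3), that gives $\widetilde{f}_i B\subset B\cup\{0\}$, and for imaginary $i$ the $f_i$-strings need not be short, only free of the $\mathfrak{sl}_2$-type back-and-forth) do not affect the overall strategy.
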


Similarly, using the operator $e_{i}'$ in place of $e_i$, we can
develop the lower crystal basis theory for $U_{q}^{-}(\g)$. In
particular, we have the following proposition.

\begin{prop}[\cite{JKK, K2}] \label{l(infty)}
 Let
\begin{equation*}
\begin{aligned}
& L(\infty)= \mathbb{A}_0\text{-submodule generated by }\{ \widetilde{f}_{i_1} \cdots \widetilde{f}_{i_k} \boldmath{1} \mid\, i_s\in I, k \ge 0\},\\
& B(\infty)=  \{ \widetilde{f}_{i_1} \cdots \widetilde{f}_{i_k}
\boldmath{1}\ \mathrm{mod}\ q L(\infty)\mid \,i_s \in I, k \ge 0\}
\subset L(\infty) / q L(\infty).
\end{aligned}
\end{equation*}
Then the pair $(L(\infty), B(\infty))$ is a unique lower crystal
basis of $U_q^-(\g)$.
\end{prop}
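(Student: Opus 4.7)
The plan is to follow Kashiwara's grand-loop strategy, adapted to the generalized Kac--Moody setting as in \cite{JKK, K2}. The key device is to prove simultaneously, by induction on the height of $\mu \in Q^+$, the following package at weight $-\mu$: (A) $\widetilde e_i L(\infty) \subset L(\infty)$ and $\widetilde f_i L(\infty) \subset L(\infty)$; (B) the analogous inclusions $\widetilde e_i L(\lambda), \widetilde f_i L(\lambda) \subset L(\lambda)$ for every $\lambda \in P^+$; (C) the natural projection $\pi_\lambda\colon U_q^-(\g) \twoheadrightarrow V_q(\lambda)$, $P \mapsto Pv_\lambda$, sends $L(\infty)$ onto $L(\lambda)$ and intertwines the Kashiwara operators modulo $qL$; and (D) the $I$-colored arrows on $L(\infty)/qL(\infty)$ satisfy $\widetilde e_i \widetilde f_i b = b$ for every $b$, together with the reverse identity whenever $\widetilde e_i b \ne 0$.

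First I would install the Kashiwara decomposition on $U_q^-(\g)$ via $e_i'$: the commutation relation $e_i' f_j = \delta_{ij} + q_i^{-a_{ij}} f_j e_i'$ produces a unique expansion $P = \sum_{n \ge 0} f_i^{(n)} P_n$ with $e_i' P_n = 0$, and one sets $\widetilde f_i P = \sum_n f_i^{(n+1)} P_n$, $\widetilde e_i P = \sum_n f_i^{(n-1)} P_n$. On a module $V \in \mathcal{O}_{\mathrm{int}}$ one uses the parallel decomposition with $e_i$ in place of $e_i'$, which is valid for real $i$ by the $\mathfrak{sl}_2$-theory and for imaginary $i$ by axioms (d)--(f) in the definition of $\mathcal{O}_{\mathrm{int}}$. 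The compatibility of the two constructions under $\pi_\lambda$ is the heart of the loop and is controlled by the non-degenerate symmetric bilinear forms \eqref{Eqn:blf-infty} and \eqref{eq:bilinear form on V(lambda)}: these provide the almost-orthogonality estimates that preserve the $\mathbb{A}_0$-lattice property under $\widetilde e_i$ and $\widetilde f_i$.

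The induction then proceeds weight-by-weight. Assuming (A)--(D) at all weights of height strictly less than $\mathrm{ht}(\mu)$, one picks $\lambda \in P^+$ sufficiently large (so that $\lambda(h_i) \gg 0$ for every $i$ appearing in $\mu$) to guarantee that $\pi_\lambda\colon (U_q^-(\g))_{-\mu} \to V_q(\lambda)_{\lambda - \mu}$ is an isomorphism; the statement at weight $-\mu$ is verified on the $V_q(\lambda)$ side by a direct computation on the decomposition $V_q(\lambda) = \bigoplus_{n \ge 0} f_i^{(n)} (\ker e_i)$, and then transported back to $U_q^-(\g)$ via the chosen $\pi_\lambda$. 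Once (A)--(D) hold at every weight, $B(\infty)$ is a $\QQ$-basis of $L(\infty)/qL(\infty)$ by its spanning definition together with the $\widetilde e_i$-stability that supplies linear independence, and the crystal-basis axioms are exactly (D).

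The main obstacle is the imaginary-node portion of the argument: because $f_i^{(k)} = f_i^k$ for $i \in I^{\mathrm{im}}$, no $q$-binomial cancellations are available, and one must lean heavily on the categorical constraints ($\mu(h_i) \ge 0$ on every weight; $f_i V_\mu = 0$ when $\mu(h_i) = 0$; $e_i V_\mu = 0$ when $\mu(h_i) = -a_{ii}$) to control the possible imaginary strings at each stage of the loop. For uniqueness, any lower crystal basis $(L, B)$ of $U_q^-(\g)$ with $\mathbf{1} \in L$ and $\mathbf{1} \bmod qL \in B$ must, by the stability axiom, contain every $\widetilde f_{i_1} \cdots \widetilde f_{i_k} \mathbf{1}$, so $L \supset L(\infty)$; the reverse inclusion and the equality $B = B(\infty)$ then follow because the generators used to define $L(\infty)$ already exhaust $L/qL$ by (A) and a weight-by-weight rank count.
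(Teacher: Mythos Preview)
The paper does not give a proof of this proposition: it is stated as a result quoted from \cite{JKK, K2}, with no argument supplied. So there is nothing in the paper to compare your proposal against directly.

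That said, your outline is a faithful high-level sketch of the grand-loop argument as carried out in \cite{K2} and extended to the Borcherds--Cartan setting in \cite{JKK}, and it identifies the correct ingredients: the $e_i'$-decomposition on $U_q^-(\g)$, the simultaneous induction over $V_q(\lambda)$ and $U_q^-(\g)$ via the projections $\pi_\lambda$, the passage to large $\lambda$ to make $\pi_\lambda$ an isomorphism in a fixed weight, and the special handling of imaginary indices through the axioms of $\mathcal{O}_{\mathrm{int}}$. One caution: the actual grand loop in \cite{K2, JKK} carries a substantially longer list of interlocking inductive statements than your (A)--(D) (in Kashiwara's paper there are about a dozen, including almost-orthonormality of $B$ with respect to the bilinear form, the tensor-product compatibility $L(\infty)\otimes T_\lambda \to L(\lambda)$, and precise control of $\ker\,\pi_\lambda$ on the crystal level), and each is needed to close the loop. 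Your sketch is correct in spirit but compresses these dependencies; a reader should be pointed to the cited references for the full list and the order in which they are established.
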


Motivated by the properties of lower crystal bases, we define the
notion of {\em abstract crystals} and {\em crystal morphisms} as
follows.

\begin{defn}
An {\em abstract crystal} is a set B together with the maps
$\mathrm{wt}\cl B \to P$, $\varphi_i$, $\varepsilon_i\cl B \to {\ZZ}
\cup \{-\infty \}$ and $\widetilde{e_i},\widetilde{f_i}$ : $B \to
B\cup\{0\}$ with the following properties:
\begin{enumerate}
\item $\varphi_i(b)=\varepsilon_i(b)+\left<h_i, \mathrm{wt}(b)\right>.$
\item $\mathrm{wt}(\widetilde{e_i}b)=\mathrm{wt}(b)+\alpha_i$, $\mathrm{wt}(\widetilde{f_i}b)=\mathrm{wt}(b)-\alpha_i.$
\item $b=\widetilde{e_i}b'$ if and only if $ \widetilde{f_i}b=b'$,
where $b, b'  \in B$, $i \in I$.
\item If $\varphi_i(b)=-\infty$, then
$\widetilde{e_i}b=\widetilde{f_i}b=0$.
\item If $b\in B$ and $\widetilde{e_i}b \in B$,  then \[ \varepsilon_i(\widetilde{e_i}b)=\begin{cases} \varepsilon_i(b)-1 \ & \text{if} \ i \in I^{\re}, \\
\varepsilon_i(b) \ & \text{if} \ i \in I^{\im},\end{cases}\\
\ \ \varphi_i(\widetilde{e_i}b)=\begin{cases} \varphi_i(b)+1  \ &
\text{if} \ i \in I^{\re},\\ \varphi_i(b)+a_{ii}  \ & \text{if} \ i
\in I^{\im}.\end{cases}\]
\item If $b\in B$ and  $\widetilde{f_i}b \in B$, then  \[ \varepsilon_i(\widetilde{f_i}b)=\begin{cases} \varepsilon_i(b)+1 \ & \text{if} \  i \in I^{\re}, \\
\varepsilon_i(b) \ & \text{if} \ i \in I^{\im},\end{cases}\\
\ \ \varphi_i(\widetilde{f_i}b)=\begin{cases} \varphi_i(b)-1  \ &
\text{if} \  i \in I^{\re},\\ \varphi_i(b)-a_{ii}  \ & \text{if} \ i
\in I^{\im}.\end{cases}\]
\end{enumerate}
\end{defn}

\begin{defn}
Let $B_1$ and $B_2$ be abstract crystals. A {\em crystal morphism}
between $B_1$ and $ B_2$ is a map $\widetilde{\psi}:B_1 \to
B_2\sqcup \{0\}$ with the following properties:
\begin{enumerate}
\item For $b\in B_1$, $\widetilde{\psi}(b) \in B_2$ and $i\in I$,
we have $\mathrm{wt}(\widetilde{\psi}(b))= \mathrm{wt}(b)$,
$\varepsilon_i(\widetilde{\psi}(b))= \varepsilon_i(b)$ and $\varphi_i({\widetilde{\psi}}(b))= \varphi_i(b).$
\item  Suppose $b,b'\in B_1$ and  $\widetilde{\psi}(b), \widetilde{\psi}(b') \in B_2$.
If $\widetilde{f}_i b=b'$, then $\widetilde{f}_i(\widetilde{\psi}(b))=\widetilde{\psi}(b')$ and
$\widetilde{e}_i(\widetilde{\psi}(b'))=\widetilde{\psi}(b)$\, for $i\in I.$
\end{enumerate}
\end{defn}

\vskip 5mm

\section{Lower global bases} \label{sec:global}

Let $V$ be a vector space over $\QQ(q)$ and let
$$\mathbb{A} \seteq \QQ[q, q^{-1}], \quad
\mathbb{A}_\infty := \{ f\in \QQ(q)\mid f \text{ is regular at }
q=\infty\}.$$ Let $L_{\mathbb{A}}$ (respectively, $L_{0}$ and
$L_{\infty}$) be a free $\mathbb{A}$-submodule (respectively, free
$\mathbb{A}_{0}$-submodule and free $\mathbb{A}_{\infty}$-submodule)
of $V$ such that
$$\QQ(q) \otimes_{\mathbb{A}} L_{\mathbb{A}} \cong \QQ(q)
\otimes_{\mathbb{A}_{0}} L_{0}  \cong \QQ(q)
\otimes_{\mathbb{A}_{\infty}} L_{\infty} \cong V.$$

\begin{defn} \hfill
\begin{enumerate}
\item The triple $( L_{\mathbb{A}}, L_0, L_\infty)$ is called a {\em
balanced triple} of $V$ if the canonical projection $\pi:
L_{\mathbb{A}} \cap L_{0} \cap L_{\infty} \rightarrow L_{0} \big/ q
L_{0}$ is an isomorphism.

\item  Assume that $( L_{\mathbb{A}}, L_0, L_\infty)$ is a balanced triple.
Let $B$ be a $\QQ$-basis of $L_{0} \big/ qL_{0}$ and let  $G\cl
L_{0} \big/ q L_{0} \rightarrow L_{\mathbb{A}} \cap L_{0} \cap
L_{\infty}$ be the inverse of $\pi$. Then the set $\mathbf{B}\seteq
\{G(b) \mid \, b \in B\}$ is called the {\em lower global basis} of
$V$ corresponding to $B$.
\end{enumerate}
\end{defn}

Consider the $\QQ$-algebra automorphism $-: U_q(\g)\to U_q(\g)$
given by
\[ \ol{q}=q^{-1},\ \ \overline{e_i} = e_i, \ \ \overline{f_i}= f_i, \ \ \overline{q^h}= q^{-h}. \]
Define an involution  $-$ of $V_q(\lambda)$ by
\[ \overline{P v_\lambda} = \overline{P} v_\lambda,\]
where $P \in U_q(\g)$ and $v_\lambda\in V_q(\lambda)$ is the highest
weight vector.

Let $(L(\lambda), B(\lambda))$ be the lower crystal basis of
$V_q(\lambda)$ given in Proposition \ref{CBlambda} and let
$U_{\mathbb{A}}^{-}(\g)$ be the $\mathbb{A}$-subalgebra of
$U_{q}^{-}(\g)$ generated by $f_{i}^{(n)}$ $(i \in I, n\ge 0)$. Set
$$V_{\mathbb A}(\lambda)\seteq U_{\mathbb{A}}^{-}(\g) v_{\lambda}.$$ Then
$(V_{\mathbb A}(\lambda), L(\lambda), \overline{L(\lambda)})$ is a
balanced triple of $V_q(\lambda)$. Since $B(\lambda)$ is a
$\QQ$-basis of $L(\lambda)\big/ q L(\lambda)$, we obtain the lower
global basis $\mathbf{B}(\lambda) = \{G(b) \mid \, b \in B(\lambda)
\}$ of $V_q(\lambda)$.

Similarly, $(U_{\mathbb{A}}^{-}(\g), L(\infty),
\overline{L(\infty)})$ is a balanced triple of $U_{q}^{-}(\g)$ and
we get the lower global basis $\mathbf{B}(\infty) =  \{G(b) \mid \,
b \in B(\infty)\}$ of $U_{q}^{-}(\g)$.

\vskip 3mm

The lower global bases satisfy the following properties.

\begin{prop} [\cite{HK, JKK, K2}] \hfill
\bnum
\item For any $b \in B(\lambda)$ with $\lambda \in P^{+}$, $G(b)$ is
a unique element in $V_{\mathbb A}(\lambda) \cap L(\lambda)$  such that
$$\overline{G(b)} = G(b), \quad G(b) \equiv b \quad \mathrm{mod} \ q
L(\lambda).$$

\item For any $b \in B(\infty)$, $G(b)$ is a unique element in
$U_{\mathbb{A}}^{-}(\g) \cap L(\infty)$ 
such that
$$\overline{G(b)} = G(b), \quad G(b) \equiv b \quad
\mathrm{mod} \ q L(\infty).$$
\ee
\end{prop}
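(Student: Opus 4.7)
The plan is to derive both (i) and (ii) from the balanced triple structure established in the preceding paragraph together with standard bar-invariance arguments. For uniqueness, suppose $u \in V_{\mathbb A}(\lambda) \cap L(\lambda)$ satisfies $\overline u = u$ and $u \equiv b \pmod{qL(\lambda)}$. Then bar-invariance together with $u \in L(\lambda)$ gives $u = \overline u \in \overline{L(\lambda)}$, so $u$ lies in the balanced intersection $\mathcal L \seteq V_{\mathbb A}(\lambda) \cap L(\lambda) \cap \overline{L(\lambda)}$. By the balanced triple property, the canonical projection $\pi\cl \mathcal L \to L(\lambda)/qL(\lambda)$ is an isomorphism and it sends both $u$ and $G(b)$ to $b$; hence $u = G(b)$.

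For existence, we only need to check $\overline{G(b)} = G(b)$, since the other two conditions are built into the definition of $G(b)$. The bar involution fixes $V_{\mathbb A}(\lambda)$ (because $v_\lambda$ is bar-invariant and each divided power $f_i^{(n)}$ is bar-invariant) and swaps $L(\lambda) \leftrightarrow \overline{L(\lambda)}$, so it preserves $\mathcal L$ and descends via $\pi$ to a $\QQ$-linear involution $\sigma$ of $L(\lambda)/qL(\lambda)$. It suffices to show $\sigma|_{B(\lambda)} = \mathrm{id}$. I would prove this by induction on the crystal depth $d(b)$, the minimal $k$ such that $b = \widetilde f_{i_1} \cdots \widetilde f_{i_k} v_\lambda \mod qL(\lambda)$: the base case $b = v_\lambda \mod qL(\lambda)$ is immediate. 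For the inductive step, writing $b = \widetilde f_i b'$, the bar-invariant element $f_i^{(n)} G(b')$ (for an appropriate $n$) lies in $\mathcal L$ and reduces mod $qL(\lambda)$ to $b$ plus a combination of crystal elements of strictly smaller depth; subtracting their $G$-values, which are bar-invariant by the inductive hypothesis, produces a bar-invariant lift of $b$ inside $\mathcal L$, and this lift must equal $G(b)$ by the uniqueness just proved.

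The main obstacle is the triangularity claim in the inductive step: that $f_i^{(n)} G(b')$ reduces mod $qL(\lambda)$ to $b$ plus a $\QQ$-linear combination of crystal elements of strictly smaller depth. This requires care in the presence of imaginary simple roots, exploiting the constraints of $\mathcal O_{\mathrm{int}}$ such as $f_i V_\mu = 0$ when $a_{ii} \le 0$ and $\mu(h_i) = 0$, and the fact that on $L(\lambda)$ the operator $\widetilde f_i$ coincides (modulo $qL(\lambda)$) with $f_i$ for imaginary $i$ and with the appropriate divided power for real $i$. Statement (ii) for $B(\infty)$ is proved by the same scheme, with $V_{\mathbb A}(\lambda)$, $L(\lambda)$ and $v_\lambda$ replaced by $U_{\mathbb A}^-(\g)$, $L(\infty)$ and $\mathbf 1$ respectively, using Proposition \ref{l(infty)} in place of Proposition \ref{CBlambda}.
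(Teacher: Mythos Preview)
The paper does not prove this proposition; it is quoted from \cite{HK, JKK, K2} without argument, so there is no ``paper's own proof'' to compare with. Your uniqueness argument is correct and standard. The existence half, however, has a genuine gap.

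First, the assertion that $f_i^{(n)} G(b')\in\mathcal L$ is unjustified. Membership in $V_{\mathbb A}(\lambda)$ and bar-invariance are clear, but membership in $L(\lambda)$ is not: the divided powers $f_i^{(n)}$ do \emph{not} preserve the lower crystal lattice. Already for $\mathfrak{sl}_2$ with $\lambda=2$ one has $f\cdot(f v_\lambda)=[2]_i\,f^{(2)}v_\lambda\notin L(\lambda)$ since $[2]_i=q_i+q_i^{-1}\notin\mathbb A_0$. Even when $\varepsilon_i(b')=0$, the element $G(b')$ need not lie in $\ker e_i$, so $f_i^{(n)}G(b')$ need not equal $\widetilde f_i^{\,n}G(b')$.

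Second, and more seriously, the triangularity goes in the opposite direction from what you claim. By Proposition~\ref{prop:lgb}(ii), for real $i$ one has
\[
f_i\,G(b')=[1+\varepsilon_i(b')]_i\,G(\widetilde f_i b')+\sum_{\varepsilon_i(b'')>1+\varepsilon_i(b')}F^i_{b',b''}\,G(b''),
\]
so the correction terms $b''$ have the \emph{same} weight as $b=\widetilde f_i b'$ (hence the same crystal depth) and \emph{larger} $\varepsilon_i$, not smaller depth. Your induction on depth therefore does not close: at the inductive step you would need bar-invariance of $G(b'')$ for elements $b''$ of the same depth as $b$. The remedy in the literature is a more delicate scheme---essentially Kashiwara's ``grand loop'' in \cite{K2}, or a double induction on height together with a downward induction on $\varepsilon_i$ within each weight space---in which the balanced-triple property, the bar-invariance of $G(b)$, and the filtration statement of Proposition~\ref{prop:lgb}(i) are established simultaneously rather than sequentially. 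Your discussion of the imaginary case is not where the difficulty lies; it is the real simple roots that force the subtler argument.
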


\begin{prop}[\cite{K4}] \label{prop:lgb}  The lower global bases $\mathbf{B}(\lambda)$ and
$\mathbf{B}(\infty)$ satisfy the following properties.

\bnum

\item For any $n \in {\ZZ}_{\ge 0}$ and $b \in B(\lambda)$
\ro respectively, $b \in B(\infty)$\rf, the subset
$\{G(b)\mid\varepsilon_i(b) \geq n \}$ of $\mathbf{B}(\lambda)$ \ro
respectively, of $\mathbf{B}(\infty)$\rf  \ is an $\mathbb{A}$-basis
of $\sum_{k \ge n} f_{i}^{(k)} V_{\mathbb A}(\lambda)$ \ \ro
respectively, of  $\sum_{k \ge n} f_{i}^{(k)}
U_{\mathbb{A}}^{-}(\g)$\rf.

\item For any $i \in I$ and $b \in B(\lambda)$ \ro respectively, $b \in
B(\infty)$\rf, we have
\begin{equation*}
\begin{array}{l}
f_iG(b)=
\begin{cases}
[1+\varepsilon_i(b) ]_iG(\widetilde{f_i}b)+\sum_{b'}F^{i}_{b,b'}G(b') \ &
\text{if $i \in I^{\re}$,}\\
G(\widetilde{f_i}b) \ &\text{if $i \in I^{\im}$,}
\end{cases}\\
 \end{array}
 \end{equation*}
 where the sum ranges over $b'$ such that $\varepsilon_i(b')>1+\varepsilon_i(b)$ and $F^{i}_{b,b'}\in \mathbb{A}$.

\end{enumerate}

\end{prop}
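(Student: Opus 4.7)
The plan is to establish (i) first, since it controls the $f_i^{(k)}$-filtration on the integral form, and then extract (ii) by identifying the leading term of $f_iG(b)$ with respect to that filtration.

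For (i), set $V_{\mathbb{A}}^{\ge n}\seteq\sum_{k\ge n}f_i^{(k)}V_{\mathbb A}(\la)$ and $L^{\ge n}\seteq V_{\mathbb{A}}^{\ge n}\cap L(\la)$, with parallel definitions inside $U_{\mathbb{A}}^{-}(\g)$ for the $B(\infty)$ version. I would proceed in three steps: (a) verify that $V_{\mathbb{A}}^{\ge n}$ is stable under the bar involution, which is immediate from $\overline{f_i^{(k)}}=f_i^{(k)}$ and bar-stability of $V_{\mathbb A}(\la)$; (b) identify $L^{\ge n}\bmod qL(\la)$ with the $\QQ$-span of $\{b\in B(\la)\mid \varepsilon_i(b)\ge n\}$, by decomposing each weight space into $i$-strings via the canonical form $v=\sum f_i^{(k)}v_k$ with $v_k\in\ker e_i$, and analyzing modulo $qL(\la)$ separately for real $i$ (via reduction to $U_{q_i}(\mathfrak{sl}_2)$-modules) and imaginary $i$ (directly, using $f_i^{(k)}=f_i^k$ and $f_i=\widetilde f_i$); (c) conclude that $(V_{\mathbb{A}}^{\ge n}, L^{\ge n}, \overline{L^{\ge n}})$ is a balanced triple of $\QQ(q)\otimes V_{\mathbb{A}}^{\ge n}$, so by the uniqueness in the preceding proposition its lower global basis must be exactly $\{G(b)\mid \varepsilon_i(b)\ge n\}$.

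For (ii), set $n=\varepsilon_i(b)$. Since $G(b)\in V_{\mathbb{A}}^{\ge n}$ and $f_i\cdot f_i^{(k)}=[k+1]_if_i^{(k+1)}$, we have $f_iG(b)\in V_{\mathbb{A}}^{\ge n+1}$, whence by (i)
\begin{equation*}
f_iG(b)=\sum_{b'\colon\,\varepsilon_i(b')\ge n+1}c_{b,b'}\,G(b'),\qquad c_{b,b'}\in\mathbb{A}.
\end{equation*}
In the imaginary case, $f_i^{(k)}=f_i^k$ makes $f_i$ and $\widetilde f_i$ coincide on $V_q(\la)$ (and on $U_q^-(\g)$ via $e_i'$), so $f_iG(b)\equiv\widetilde f_i b\pmod{qL(\la)}$; combined with the manifest bar-invariance $\overline{f_iG(b)}=f_iG(b)$, the uniqueness of lower global basis elements forces $f_iG(b)=G(\widetilde f_ib)$. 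In the real case, decompose $G(b)=\sum_{k\ge 0}f_i^{(k)}v_k$ with $v_k\in\ker e_i$, so that the leading term $f_i^{(n)}v_n$ is congruent to $b$ modulo $qL(\la)$. Then $f_iG(b)=\sum_k[k+1]_if_i^{(k+1)}v_k$, whose leading piece $[n+1]_if_i^{(n+1)}v_n=[n+1]_i\widetilde f_i(f_i^{(n)}v_n)$ reduces modulo $qL(\la)$ to $[n+1]_i\widetilde f_ib$. Since $\varepsilon_i(\widetilde f_ib)=n+1$ in the real case, $[1+\varepsilon_i(b)]_iG(\widetilde f_ib)$ also lies in $V_{\mathbb{A}}^{\ge n+1}$ and has the same class modulo $qL(\la)$, so the difference $f_iG(b)-[1+\varepsilon_i(b)]_iG(\widetilde f_ib)$ sits in $V_{\mathbb{A}}^{\ge n+2}$ and by (i) expands into $G(b')$'s with $\varepsilon_i(b')\ge n+2>1+\varepsilon_i(b)$, as required.

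The main obstacle is step (b) of part (i), namely the $q=0$ identification $L^{\ge n}\bmod qL(\la)=\bigoplus_{\varepsilon_i(b)\ge n}\QQ\,b$. This is the bridge between the algebraic filtration by the divided powers $f_i^{(k)}$ and the combinatorial filtration by $\varepsilon_i$ on the crystal. In the generalized Kac-Moody setting it splits into two distinct arguments, and the imaginary case is the more delicate of the two because the usual $U_q(\mathfrak{sl}_2)$-reduction is unavailable; one must instead exploit directly the identity $f_i=\widetilde f_i$ along imaginary $i$-strings and track the filtration step by step.
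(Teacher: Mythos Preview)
The paper does not prove this proposition at all: it is quoted from \cite{K4} and stated without proof. So there is no ``paper's own proof'' to compare against; what you have written is essentially a reconstruction of Kashiwara's original argument, and your overall architecture for (i) --- bar-stability, identification of $L^{\ge n}\bmod qL(\la)$ with the span of $\{b\mid\varepsilon_i(b)\ge n\}$, then the balanced-triple machinery --- is exactly the standard route.

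There is, however, a genuine gap in your derivation of (ii) in the real case. You write that $[n+1]_i f_i^{(n+1)}v_n$ ``reduces modulo $qL(\la)$ to $[n+1]_i\widetilde f_ib$'' and that $[n+1]_iG(\widetilde f_ib)$ ``has the same class modulo $qL(\la)$''. But for $n\ge1$ the quantum integer $[n+1]_i=q_i^{-n}+q_i^{-n+2}+\cdots+q_i^{n}$ does not lie in $\mathbb{A}_0$, so neither $f_iG(b)$ nor $[n+1]_iG(\widetilde f_ib)$ is in $L(\la)$, and talking about their classes modulo $qL(\la)$ is meaningless. (Already for $U_{q}(\mathfrak{sl}_2)$ acting on $V_q(2)$ one has $f\,G(b_1)=[2]\,G(b_2)\notin L$.) The crystal lattice controls $\widetilde f_i$, not $f_i$, and this is precisely the place where the two diverge.

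The fix is to compare in the associated graded $V_{\mathbb A}^{\ge n+1}/V_{\mathbb A}^{\ge n+2}$ rather than in $L(\la)/qL(\la)$. One needs the auxiliary statement (proved in \cite{K4} alongside (i)) that for $b_0\in B(\la)$ with $\varepsilon_i(b_0)=0$ one has
\[
f_i^{(n)}G(b_0)\equiv G(\widetilde f_i^{\,n}b_0)\pmod{V_{\mathbb A}^{\ge n+1}}.
\]
Granting this, write $G(b)\equiv f_i^{(n)}G(\widetilde e_i^{\,n}b)\pmod{V_{\mathbb A}^{\ge n+1}}$; then $f_iG(b)\equiv[n+1]_i\,f_i^{(n+1)}G(\widetilde e_i^{\,n}b)\equiv[n+1]_i\,G(\widetilde f_ib)\pmod{V_{\mathbb A}^{\ge n+2}}$, and (i) finishes the job. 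Your sketch of (i) and of the imaginary case of (ii) is fine; it is only this leading-coefficient identification that needs to be redone in the correct quotient.
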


\vskip 5mm

\section{Dual perfect bases} \label{Sec:Dual PB}

In this section, we introduce the notion of {\em dual perfect
bases}. Fix a Borcherds-Cartan datum $(A, P, \Pi, \Pi^{\vee})$ and
let $\mathbf{k}$ be a field.

\begin{defn}\label{def:pre-perfect}
Let $V = \bigoplus_{\mu \in P} V_{\mu}$ be a  $P$-graded
$\cor$-vector space and let $\{f_i \}_{i\in I}$ be a family of
endomorphisms of $V$. We say that $(V,\{f_i\}_{i\in I})$ is a {\em
pre-dual perfect space} if it satisfies the following conditions.

\begin{enumerate}
\item There exist finitely many elements $\lambda_1, \ldots,
\lambda_k \in P $ such that $\mathrm{wt}(V) \subset \bigcup_{j=1}^k (\lambda_j - Q^{+})$.
\item
$f_{i} (V_{\mu}) \subset V_{\mu -
\alpha_i}$ for any $i\in I$ and $\mu\in P$.
\end{enumerate}
\end{defn}

\vskip 3mm

For each $i \in I$ and
$v\in V\setminus\{0\}$, define $\el_{i}(v)$ to be the non-negative integer $n$ such
that $v \in f_i^nV\setminus f_i^{n+1}V$.

\begin{defn}\label{Def: DPB}

Let $(V, \{f_i\}_{i \in I})$ be a pre-dual perfect space.

\bnum
\item  A basis $\mathbf{B}$ of $V$ is called a {\em dual perfect
basis} if

\bna
\item $\mathbf{B}=\bigsqcup_{\mu \in P} {\mathbf B}_{\mu}$, where
${\mathbf B}_{\mu} = {\mathbf B} \cap V_{\mu}$,

\item For any $i \in I$,  there exists a map
$\f_i\cl \B\to \B\cup\{0\}$ such that
for any $b \in {\mathbf B}$, there exists $c\in \mathbf{k}^\times$
satisfying
\begin{equation*}
 f_i(b)-c\,\mathbf{f}_i(b)\in f_i^{\el_i(b)+2}V.
\end{equation*}

\item If ${\mathbf f}_{i}(b) = {\mathbf f}_{i}(b')\not=0$, then $b
= b'$.

\end{enumerate}

\item $V$ is called a {\em dual perfect space} if it has a dual
perfect basis.

\end{enumerate}

\end{defn}

\vskip 3mm

\begin{prop} \label{prop:dual basis}
Every $U_{q}(\g)$-module in ${\mathcal O}_{\mathrm{int}}$ has a dual
perfect basis.
\end{prop}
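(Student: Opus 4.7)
The plan is to exploit the semisimplicity of $\mathcal{O}_{\mathrm{int}}$ to reduce to the case $V = V_q(\lambda)$ for $\lambda \in P^+$, and then exhibit the lower global basis $\mathbf{B}(\lambda) = \{G(b) \mid b \in B(\lambda)\}$ of Section~\ref{sec:global} as a dual perfect basis with
\[\mathbf{f}_i\cl \mathbf{B}(\lambda) \to \mathbf{B}(\lambda)\cup\{0\}, \qquad \mathbf{f}_i(G(b)) = G(\widetilde f_i b),\]
under the convention $G(0) \seteq 0$. The reduction is legitimate because a disjoint union of dual perfect bases on the summands of a decomposition $V = \bigoplus_\alpha V_q(\lambda_\alpha)$ is again a dual perfect basis: weight grading, the definition of $\mathbf{f}_i$ and the invariant $\el_i$ all respect the direct sum decomposition.

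The crux of condition~(b) in Definition~\ref{Def: DPB} is the equality
\[ \el_i(G(b)) = \varepsilon_i(b). \]
By Proposition~\ref{prop:lgb}(1), $\{G(b') \mid \varepsilon_i(b') \ge n\}$ is an $\mathbb{A}$-basis of $\sum_{k\ge n} f_i^{(k)} V_{\mathbb{A}}(\lambda)$; extending scalars to $\QQ(q)$ and using invertibility of $[k]_i!$ (so that $f_i^{(k)} V_q(\lambda) = f_i^k V_q(\lambda)$) identifies this set with a $\QQ(q)$-basis of $\sum_{k\ge n} f_i^k V_q(\lambda) = f_i^n V_q(\lambda)$. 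Hence $G(b) \in f_i^n V_q(\lambda)$ exactly when $\varepsilon_i(b) \ge n$, which gives the claim.

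Granted $\el_i(G(b)) = \varepsilon_i(b)$, Proposition~\ref{prop:lgb}(2) yields~(b) at once: for $i \in I^{\re}$ every $b'$ in the tail of the expansion of $f_i G(b)$ satisfies $\varepsilon_i(b') > 1 + \varepsilon_i(b)$, i.e.\ $\varepsilon_i(b') \ge \el_i(G(b)) + 2$, so $\sum_{b'} F^i_{b,b'} G(b') \in f_i^{\el_i(G(b))+2} V_q(\lambda)$, and one takes $c = [1+\varepsilon_i(b)]_i \in \QQ(q)^{\times}$; for $i \in I^{\im}$, one has $f_i G(b) = G(\widetilde f_i b)$ on the nose and $c = 1$ works. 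Conditions~(a) and~(c) are then formal: (a) follows from weight-compatibility of $G$, and (c) from the crystal identity $\widetilde e_i \widetilde f_i b = b$ whenever $\widetilde f_i b \neq 0$ combined with injectivity of $G$ on $B(\lambda)$. The only genuinely non-formal point is the translation between the divided-power filtration $\sum_{k\ge n} f_i^{(k)} V_{\mathbb{A}}(\lambda)$ used in Proposition~\ref{prop:lgb}(1) and the ordinary-power filtration $f_i^n V_q(\lambda)$ in the definition of $\el_i$, which is absorbed by extending scalars to $\QQ(q)$.
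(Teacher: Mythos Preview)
Your proof is correct and follows the same approach as the paper: reduce to $V_q(\lambda)$ by semisimplicity, take the lower global basis with $\mathbf{f}_i G(b)=G(\widetilde f_i b)$, and invoke Proposition~\ref{prop:lgb}. The paper's proof is a one-line citation of Proposition~\ref{prop:lgb}; you have simply unpacked the verification of Definition~\ref{Def: DPB}(b), in particular the identification $\el_i(G(b))=\varepsilon_i(b)$ via part~(1) and the filtration check via part~(2).
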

\begin{proof}
It suffices to show that every irreducible highest weight module
$V_q(\lambda)$ $(\lambda \in P^{+})$ has a dual perfect basis. Let
${\mathbf B}(\lambda)= \{G(b) \mid \, b \in B(\lambda)\}$ be the
lower global basis of $V_q(\lambda)$. Define
$${\mathbf f}_{i}\, G(b)\seteq G(\widetilde{f_i}b) \ \ \text{for} \ b \in
B(\lambda).$$ Then by Proposition \ref{prop:lgb}, ${\mathbf
B}(\lambda)$ is a dual  perfect basis.
\end{proof}

\vskip 3mm

Let $\mathbf{B}$ be a dual perfect  basis of a $P$-graded
$\cor$-vector space $V$.

\Lemma\label{lem:fund} Let $i\in I$.
\bnum
\item
For any $b\in \mathbf{B}$ and $n\in \Z_{\ge0}$, there exists
$c\in\cor^\times$
such that
\eq&&f_i^nb-c\,\mathbf{f}_i ^n(b)\in f_i^{n+\el_i(b)+1}V.
\label{eq:fn}\eneq
\item
For any $i\in I$ and $n\in\Z_{\ge0}$, we have
$$f_i^nV=\soplus_{b\in \f_i^n\B}\cor\, b.$$
\item For any $b\in \mathbf{B}$, we have
$$\el_i(b)=\max\{n\in\Z_{\ge0}\mid b\in \mathbf{f}_i^n\mathbf{B}\}.$$
\item  For any $b\in \mathbf{B}$ such that $\mathbf{f}_ib\not=0$,
we have $$\el_i(\mathbf{f}_ib)=\el_i(b)+1.$$
\item For any $n\in\Z_{\ge0}$,
the image of $\{b\in \B\mid\el_i(b)=n\}$
is a basis of $f_i^nV/f_i^{n+1}V$.
\ee \enlemma \Proof  (i)\
By the definition, we have
$\el_i(\f_ib)\ge\el_i(b)+1$ for any $b\in\B$ such that $\f_ib\in\B$.
Hence we have
$$
\text{$\el_i(\f_i^nb)\ge\el_i(b)+n$ for any $b\in\B$ and $n\in\Z_{\ge0}$
such that $\f_i^nb\in\B$.}
$$
We shall show (i) by induction on $n$.
If $n=0,1$, (i) is obvious.
Assume $n>1$.
Then the induction hypothesis implies
that $f_i^{n-1}b-c\f_i^{n-1}b\in f_i^{n+\el_i(b)}V$ for some $c\in \cor^\times$.
Therefore, we have $f_i^{n}b-cf_i\f_i^{n-1}b\subset f_i^{n+\el_i(b)+1}V$.
Hence, if $\f_i^{n-1}b=0$, then we obtain \eqref{eq:fn}.
If $\f_i^{n-1}b\in \B$, then we have
$f_i\f_i^{n-1}b-c'\,\f_i^{n}b\in f_i^{\el_i(\f_i^{n-1}b)+2}V\subset
f_i^{n-1+\el_i(b)+2}V$ for some $c'\in \cor^\times$.
Hence we obtain (i).

\smallskip
\noi (ii)  By (i), we have
$\f_i^nb\in f_i^nV$. Hence it is enough to show that
$f_i^nV\subset\soplus_{b\in \f_i^n\mathbf{B} }\cor\,b$.  We have
$$f_i^nV\subset\soplus_{b\in \f_i^n\mathbf{B} }\cor\,
b+f_i^{n+1}V,$$
which easily follows from (i).
 Then (ii) follows from
$(f_{i}^k V)_{\lambda} =0$ for any $\lambda\in P$ and $k\gg0$. 

(iii), (iv) and  (v) easily follow from (ii). \QED

Let $\mathbf{B}$ be a dual perfect  basis of a $P$-graded $\cor$-vector
space $V$. For $b \in {\mathbf B}$, we define
\begin{align*}
\mathbf{e}_ib=
\begin{cases}
b'  & \mbox{if $b'\in\mathbf B$ satisfies $\mathbf{f}_i \,b'=b$,} \\
0  & \mbox{if there exists no $b'\in\mathbf B$ such that
$\mathbf{f}_i \,b'=b$.}
\end{cases}
\end{align*}
We also define the maps $\varepsilon_i,
\varphi_i:\mathbf{B}\to \ZZ \cup \{- \infty \}$  by
\begin{align*}
& \varepsilon_i(b)=
\begin{cases}
\el_i(b) &\text {if $i \in I^{\mathrm{re}}$,} \\
0 \quad &\text{if $i \in I^{\mathrm{im}}$,}
\end{cases} \\
&\varphi_i(b)=\varepsilon_i(b)+\left<h_i,\mathrm{wt}(b) \right>,
\end{align*}
where the map $\mathrm{wt}: {\mathbf B} \rightarrow P$ is given by
$\mathrm{wt}(b) = \mu$ if $b \in {\mathbf B}_{\mu}$.

Then it is straightforward to verify that $(\mathbf{B},
\mathbf{e}_{i}, \mathbf{f}_{i}, \varepsilon_{i}, \varphi_{i},
\mathrm{wt})$ is an abstract crystal, which will be called the {\em
dual perfect graph}.

\begin{prop} \label{prop:dual graph}
Let ${\mathbf B}(\lambda)$ be the global basis of $V(\lambda)$
$(\lambda \in P^{+})$. Then ${\mathbf B}(\lambda)$ is isomorphic to
$B(\lambda)$ as an abstract crystal.
\end{prop}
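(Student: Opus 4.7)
The plan is to check that the bijection $\Phi\colon B(\lambda)\to\mathbf{B}(\lambda)$, $b\mapsto G(b)$, is an isomorphism of abstract crystals. Bijectivity is immediate from the construction of the lower global basis, and since $G(b)$ lies in the weight space $V_{\mathrm{wt}(b)}$, the map $\Phi$ preserves weights. The definition $\mathbf{f}_iG(b)=G(\widetilde{f}_ib)$ used in the proof of Proposition~\ref{prop:dual basis} shows that $\Phi$ intertwines $\widetilde{f}_i$ with $\mathbf{f}_i$; axiom~(3) of an abstract crystal then propagates this to an intertwining of $\widetilde{e}_i$ with $\mathbf{e}_i$.

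The real content is the matching of the statistics $\varepsilon_i$; once this is in hand, equality of $\varphi_i$ follows at once from $\varphi_i=\varepsilon_i+\langle h_i,\mathrm{wt}\rangle$ together with weight preservation. For $i\in I^{\mathrm{re}}$, I would invoke Proposition~\ref{prop:lgb}(i): the subset $\{G(b)\mid\varepsilon_i(b)\ge n\}$ is an $\mathbb{A}$-basis of $\sum_{k\ge n}f_i^{(k)}V_{\mathbb{A}}(\lambda)$. Since every $[k]_i!$ is a unit in $\QQ(q)$, on passing to $\QQ(q)$-spans this sum becomes $f_i^nV_q(\lambda)$, and one reads off $\el_i(G(b))=\varepsilon_i(b)$. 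By Definition~\ref{Def: DPB}, this is exactly the value of $\varepsilon_i(G(b))$ in the dual perfect graph, so the two statistics coincide. For $i\in I^{\mathrm{im}}$, the dual perfect graph sets $\varepsilon_i(G(b))=0$ by construction, and the same convention applies on $B(\lambda)$ at imaginary indices, so the match there is automatic.

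The step carrying the real input is the identification $\el_i(G(b))=\varepsilon_i(b)$ for $i\in I^{\mathrm{re}}$, which is essentially a repackaging of Proposition~\ref{prop:lgb}(i); everything else — the weight equality, the intertwining of Kashiwara operators straight from the construction of the dual perfect structure, the imaginary-index case, and the deduction for $\varphi_i$ — amounts to tracing definitions against the axioms of an abstract crystal.
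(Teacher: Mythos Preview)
Your proposal is correct and follows the same approach as the paper: both use the bijection $b\mapsto G(b)$ together with the defining relation $\mathbf{f}_iG(b)=G(\widetilde{f}_ib)$ from Proposition~\ref{prop:dual basis}. Your argument is in fact more complete than the paper's, which records only the intertwining of $\widetilde{f}_i$ with $\mathbf{f}_i$ and leaves the matching of $\mathrm{wt}$, $\varepsilon_i$, and $\varphi_i$ implicit; your use of Proposition~\ref{prop:lgb}(i) to identify $\ell_i(G(b))=\varepsilon_i(b)$ for real $i$ is a clean way to fill that in (the paper could equally have appealed to Lemma~\ref{lem:fund}(iii) for the same purpose).
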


\begin{proof}
Recall that $\mathbf{B}(\lambda)$ becomes a dual perfect basis by
defining $\mathbf{f}_{i} G(b) = G (\widetilde{f_i}b)$ for $b\in
B(\lambda)$. Hence for $b, b' \in B(\lambda)$, we have $\widetilde{f_{i}} b
= b'$ if and only if $G(b')=G(\widetilde{f_{i}}b) = \mathbf{f}_{i}
G(b)$, which proves our claim.
\end{proof}

\vskip 3mm

\begin{rem} Let $\g$ be the generalized Kac-Moody algebra associated
with a Borcherds-Cartan datum. By taking the classical limit in
Proposition \ref{prop:lgb}, it follows that every irreducible
highest weight $\g$-module $V(\lambda)$ $(\lambda \in P^{+})$ has a
dual perfect basis whose dual perfect graph is isomorphic to
$B(\lambda)$ as an abstract crystal.
\end{rem}

\vskip 3mm

\begin{prop} \label{prop:dual Binfty} \hfill
\bnum
\item The algebra $U_{q}^{-}(\g)$ has a dual perfect basis.

\item The lower global basis $\mathbf{B}(\infty)$ is isomorphic to
$B(\infty)$ as an abstract crystal.
\ee
\end{prop}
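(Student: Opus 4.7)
The plan is to transcribe the proofs of Propositions \ref{prop:dual basis} and \ref{prop:dual graph}, with $U_q^-(\g)$ in place of $V_q(\lambda)$ and with Proposition \ref{prop:lgb} furnishing the key structural input. Concretely, I would first view $V\seteq U_q^-(\g)$ as a $\QQ(q)$-vector space graded by its root-space decomposition $U_q^-(\g)=\bigoplus_{\beta\in Q^+}U_q^-(\g)_{-\beta}$, with the operators $f_i$ acting by left multiplication; condition (1) of Definition \ref{def:pre-perfect} holds with $\lambda_1=0$, and condition (2) is immediate, so $(V,\{f_i\})$ is pre-dual perfect.

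Next, I would take $\mathbf{B}(\infty)=\{G(b)\mid b\in B(\infty)\}$, set $\mathbf{f}_iG(b)\seteq G(\widetilde{f_i}b)$, and verify Definition \ref{Def: DPB}. The decomposition $\mathbf{B}(\infty)=\bigsqcup_\mu \mathbf{B}(\infty)_\mu$ is automatic. For the main axiom (b), I would apply Proposition \ref{prop:lgb}(ii): in the real case the stated expansion $f_iG(b)=[1+\varepsilon_i(b)]_iG(\widetilde{f_i}b)+\sum_{b'}F^i_{b,b'}G(b')$ with $\varepsilon_i(b')>1+\varepsilon_i(b)$ yields $c=[1+\varepsilon_i(b)]_i\in\QQ(q)^\times$; in the imaginary case $f_iG(b)=G(\widetilde{f_i}b)$ holds exactly and gives $c=1$ with zero remainder. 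The injectivity axiom (c) reduces to the injectivity of $\widetilde{f_i}$ on its support in $B(\infty)$, which is standard.

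The only point that needs a small argument is identifying $\ell_i(G(b))$ with $\varepsilon_i(b)$ for $i\in I^{\re}$, so that the error terms in (b) actually land in $f_i^{\ell_i(b)+2}V$. For this I would use Proposition \ref{prop:lgb}(i) to see that $\{G(b')\mid \varepsilon_i(b')\ge n\}$ is an $\mathbb{A}$-basis of $\sum_{k\ge n}f_i^{(k)}U_{\mathbb{A}}^-(\g)$, then observe that after extending scalars to $\QQ(q)$ one has $\sum_{k\ge n}f_i^{(k)}U_q^-(\g)=f_i^nU_q^-(\g)$ (since the $q$-factorials are invertible over $\QQ(q)$), so $G(b)\in f_i^n V$ iff $\varepsilen_i(b)\ge n$, whence $\ell_i(G(b))=\varepsilon_i(b)$. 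I expect this identification to be the main (and essentially only) obstacle in the argument.

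Part (ii) is then immediate by repeating the argument of Proposition \ref{prop:dual graph}: the map $b\mapsto G(b)$ is bijective, weight preserving, and intertwines $\widetilde{f_i}$ with $\mathbf{f}_i$ by construction; combined with the $\ell_i$--$\varepsilon_i$ matching above (and the convention $\varepsilon_i=0$ for imaginary $i$ on both sides), it is an isomorphism of abstract crystals.
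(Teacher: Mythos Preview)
Your proposal is correct and follows exactly the paper's approach: the paper's proof simply says ``As in Proposition~\ref{prop:dual basis} and Proposition~\ref{prop:dual graph}, the lower global basis $\mathbf{B}(\infty)$ satisfies our assertions,'' so your plan to transcribe those arguments with $U_q^-(\g)$ in place of $V_q(\lambda)$ is precisely what the authors intend. Your explicit verification that $\ell_i(G(b))=\varepsilon_i(b)$ via Proposition~\ref{prop:lgb}(i) is a detail the paper leaves implicit but which is indeed needed; otherwise the arguments coincide.
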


\begin{proof}
As in Proposition \ref{prop:dual basis} and Proposition
\ref{prop:dual graph}, the lower global basis $\mathbf{B}(\infty)$
satisfies our assertions.
\end{proof}

\vskip 5mm

\section{Properties of dual perfect bases}
 Let $\cor$ be a field, and
$\B$ be a dual perfect basis of $(V,\{f_i\}_{i\in I})$.

For $b\in \B$ and $i \in I$, we set
$\e_i^{\tp}(b)=\e_i^{\el_i(b)}b\in\B$. More generally, for a
sequence $\bold{i}=(i_1,\cdots, i_m) \in I^m$ ($m\ge1$), we set
$$\e_{\ib}^\tp b:=\e_{i_m}^\tp\cdots\e_{i_1}^\tp b.$$

We also use the notations
\eqn&&
\e_{\ib}^{\,\bL}\; b\seteq \e_{i_m}^{l_m}\cdots \e_{i_1}^{l_1}b
\in \B\sqcup\{0\},\\
&&\f_{\ib}^{\,\bL}\; b\seteq \f_{i_1}^{l_1}\cdots \f_{i_m}^{l_m}b
\in \B\sqcup\{0\} \eneqn for $\bL=(l_1,\ldots,l_m)\in\Z_{\ge0}^m$
and set
$$f_{\ib}^{\,\bL}\seteq f_{i_1}^{l_1}\cdots f_{i_m}^{l_m}.$$

We say that a sequence $\ib=(i_k)_{k\ge1}$ is {\em good} if
$\{k\in\Z_{>0}\mid i_k=i\}$ is an infinite  set  for any $i\in I$.
We say that a sequence $\bL=(l_k)_{k\ge 1}$ of non-negative integers
is {\em good} if $l_k=0$ for $k\gg0$. For a good sequence
$\ib=(i_k)_{k\ge1}$ in $I$ and a good sequence $\bL=(l_k)_{k\ge 1}$
of non-negative integers, we set
\begin{align*}
& V^{>\bL,\;\ib}=\sum_{k\ge1}f_{i_1}^{l_1}\cdots f_{i_{k-1}}^{l_{k-1}}
f_{i_k}^{1+l_k}V,\\
& V^{\ge\bL,\;\ib}=V^{>\bL,\;\ib}
+f_{i_1}^{l_1}\cdots f_{i_{m}}^{l_{m}}V=\sum_{k=1}^mf_{i_1}^{l_1}\cdots f_{i_{k-1}}^{l_{k-1}}
f_{i_k}^{1+l_k}V+f_{i_1}^{l_1}\cdots f_{i_{m}}^{l_{m}}V
\end{align*}
 for $m\gg0$.
\vskip 3mm

Let $\Lt$ be the lexicographic ordering on good sequences of
integers, namely, $\bL =(l_k)_{k\ge1}\Lt \bL'=(l'_k)_{k\ge1}$ if and
only if there exists $k\ge1$ such that $l_s=l'_s$ for any $s$ with
$1\le s<k$ and $l_k<l'_k$. Then for $\bL=(l_k)_{k\ge1}$ and
$\bL'=(l'_k)_{k\ge1}$, we have
\begin{align*}
& V^{\ge\bL,\;\ib}\subset V^{\ge\bL',\;\ib}\quad\text{if $\bL'\Le \bL$,}\\
& V^{>\bL,\;\ib}\subset V^{\ge\bL',\;\ib}\quad\text{if $\bL'\Lt \bL$.}
\end{align*}

For any $v \in V\setminus\{0\}$ and a good sequence
$\ib=(i_k)_{k\ge1}$ in $I$, we define $\L(\mathbf
i,v)=(l_k)_{k\ge1}$ to be the largest sequence $\bL=(l_k)_{k\ge1}$
such that $v\in V^{\ge \bL}$. Note that  such an $\L(\mathbf i,v)$ exists and
is a good
sequence (see Proposition~\ref{prop:main}~\eqref{L=L}). Hence $v\in V^{\ge \bL,\;\ib}$ if and only if $\bL\Le
\L(\ib,v)$.

Set $\B_H=\{b\in \B\mid \text{$\el_i(b)=0$ for  all  $i\in I$}\}$. For
a good sequence $\ib = (i_{k})_{k\ge 1}$, we set
$\e_{\ib}^\tp=\e_{i_m}^\tp\cdots \e_{i_1}^\tp b$ for $m\gg0$. Note
that it does not depend on $m\gg0$ and belongs to $\B_H$
(see Proposition~\ref{prop:main}~\eqref{L=L}).

\vskip 3mm

\Prop\label{prop:main}
Let $\mathbf {i} = (i_k)_{k\ge1}$
be a good sequence in $I$.
\bnum
\item For any $b\in \B$ and a sequence
 $\bL = (l_1,\ldots,l_m)$ of non-negative integers,
there exists $c\in\cor^\times$ such that
$$ f_{i_1}^{l_1}\cdots f_{i_m}^{l_m}b-c
\,\f_{i_1}^{l_1}\cdots \f_{i_m}^{l_m}b\in
\sum_{k=1}^mf_{i_1}^{l_1}\cdots f_{i_k}^{1+l_k}V.$$
\item
For each $b \in \B$, define a sequence $(b_{k})_{k \ge
0}$ by
$$b_{0}=b, \ \ b_{k} = \e_{i_k}^{\tp} b_{k-1} \quad  \text{for} \
 \  k \ge 1,$$
and let $(\L_{k})_{k \ge 1}$ be the sequence of non-negative
integers given by
$$\L_{k}= l_{i_k}(b_{k-1}) \quad \text{for} \ \ k\ge 1.$$

 Then we have
\bna
\item $(\L_k)_{k\ge1}$ is a good sequence,
\item $b_k\in B_H$ for $k\gg0$,
\item $\L(\mathbf i,b)= (\L_k)_{k\ge1}$.
\ee
\label{L=L}
\item For any good sequence $\bL=(l_k)_{k\ge1}$ of  non-negative
integers,
we have
\eq
&&V^{\ge\bL,\;\ib}=
\sum_{\substack{\{b\in\mathbf B\;\mid \;
\bL\Le\L(\mathbf i,b)\}}}\cor\,b,\label{eq:Ve}\\
&&V^{>\bL,\;\ib}=
\sum_{\substack{\{b\in\mathbf B\;\mid \;
\bL\Lt\L(\mathbf i,b)\}}}\cor\,b.\label{eq:Vg}
\eneq

\item  For any good sequence $\bL=(l_k)_{k\ge1}$ of  non-negative integers,
we have an injective map
\begin{equation}\label{eqn:e^l}
\e^\bL\cl \{b\in\B\mid\L(\ib,b)=\bL\}\monoto \B_H.
\end{equation}
\ee
\enprop
\Proof
 Let us first prove (i) by induction on $m$.
The $m=1$ case follows from Lemma~\ref{lem:fund}~(i).
Assume that $m>1$.
Set $b_1=\f_{i_m}^{l_m}b$.
Then applying the induction hypothesis to $b_1$, there exists $c\in\cor^\times$ such that
$$f_{i_1}^{l_1}\cdots f_{i_{m-1}}^{l_{m-1}}b_1-c
\,\f_{i_1}^{l_1}\cdots \f_{i_{m-1}}^{l_{m-1}}b_1 \in
\sum_{k=1}^{m-1}f_{i_1}^{l_1}\cdots f_{i_k}^{1+l_k}V.$$
By Lemma \ref{lem:fund} (i), there exists $c'\in\cor^\times$
such that $f_{i_m}^{l_m}b -c'b_1\in f_{i_m}^{1+l_m}V$. Hence we have
$$
\begin{aligned}
& f_{i_1}^{l_1} \cdots f_{i_m}^{l_m} b - c c' \f_{i_1}^{l_1} \cdots
\f_{i_m}^{l_m} b = f_{i_1}^{l_1} \cdots f_{i_{m-1}}^{l_{m-1}}
(f_{i_m}^{l_m} b - c' b_{1}) \\
& \qquad + c'(f_{i_1}^{l_1} \cdots f_{i_{m-1}}^{l_{m-1}} b_{1} - c
\f_{i_1} \cdots \f_{i_{m-1}}^{l_{m-1}} b_{1}) \in
\sum_{k=1}^mf_{i_1}^{l_1}\cdots f_{i_k}^{1+l_k}V.
\end{aligned}
$$

\medskip

 Next we shall show (ii) (a) and (ii) (b). Since
$\wt(b)-\sum_{k=1}^m\L_k\alpha_{i_k}\in \wt(V)$ for any $m$, we have
$\L_k=0$ for $k\gg0$. Hence $b_k$ does not depend on $k\gg0$.
 Thus $\el_{i_{k+1}}(b_k)=0$ for $k\gg0$ which implies
(ii) (b).

\medskip

To prove  (ii) (c)  and (iii),  let $\L=(\L_k)_{k\ge1}$ be
the sequence in (ii) which is uniquely determined for each $b\in \B$. For
$m\in\Z_{\ge0}$, set $\widetilde{\L}_m(\mathbf i,b)=(\L_{1}, \ldots,
\L_{m})$. We first observe that for any sequence $\bL=(l_1, \ldots,
l_m)$ with $\bL \Le \widetilde{\L}_m(\mathbf i,b)$, we have \eq
&&\text{$b\in \sum_{k=1}^{m-1}f_{i_1}^{l_1}\cdots
f_{i_{k-1}}^{l_{k-1}}f_{i_k}^{1+l_k}V +f_{i_1}^{l_1}\cdots
f_{i_m}^{l_m}V$\quad if $(l_1, \ldots,
l_{m})\Le\widetilde{\L}_m(\mathbf i,b)$}, \label{eq:bb} \eneq which
immediately follows from (i).

Now we shall show
\eq&&
\sum_{k=1}^{m-1}f_{i_1}^{l_1}\cdots f_{i_{k-1}}^{l_{k-1}}f_{i_k}^{1+l_k}V
+f_{i_1}^{l_1}\cdots f_{i_m}^{l_m}V=
\sum_{\substack{\{b\in\mathbf B\;\mid \;
(l_1, \ldots, l_{m})\Le\widetilde{\L}_m(\mathbf i,b)\}}}\cor\,b.
\label{eq:bbb}
\eneq
We have already seen that the right hand side is contained in the left hand side.
Let us show the converse inclusion by induction on $m$.
In order to see this, it is enough to show that
$$f_{i_1}^{l_1}\cdots f_{i_m}^{l_m}V\subset
\sum_{\substack{\{b\in\mathbf B\;\mid \;
(l_1, \ldots, l_{m})\Le\widetilde{\L}_m(\mathbf i,b)\}}}\cor\,b.
$$ 
Set  $\mathbf i'=(i_2,i_3,\ldots)$. 
Then the induction hypothesis implies
$$f_{i_2}^{l_2}\cdots f_{i_m}^{l_m}V\subset
\sum_{\substack{\{b\in\mathbf B\;\mid \;
(l_2, \ldots, l_{m})\Le\widetilde{\L}_{m-1}(\mathbf i',b)\}}}\cor\,b.
$$
Hence we have reduced \eqref{eq:bbb} to
$$f_{i_1}^{l_1}b_0\in
\sum_{\substack{\{b\in\mathbf B\;\mid \;
(l_1, \ldots, l_{m})\Le\widetilde{\L}_m(\mathbf i,b)\}}}\cor\,b
$$
if $(l_2, \ldots, l_{m})\Le\widetilde{\L}_{m-1}(\mathbf i',b_0)$, which
follows from the fact that $f_{i_1}^{l_1}b_0\in \cor \,\mathbf
f_{i_1}^{l_1}b_0+\sum_{\el_i(b)>l_1}\cor b$ and $(l_1, \ldots,
l_{m})\Le\widetilde{\L}_m(\mathbf i,\mathbf f_{i_1}^{l_1}b_0).$ Thus
the proof of \eqref{eq:bbb} is complete.

\medskip
Now,   (ii) (c)  follows from \eqref{eq:bbb}, and then \eqref{eq:Ve} is
nothing but \eqref{eq:bbb} for $m\gg0$. Equality \eqref{eq:Vg} follows easily
from \eqref{eq:Ve}.

\medskip
In order to prove (iv), observe that
$\mathbf{e}^\bL(b)=\mathbf{e}_{\mathbf{i}}^{\mathrm{top}}(b)$, where
$\mathbf{e}^\bL$ is the map defined in  (\ref{eqn:e^l}) and
$L(\mathbf{i}, b)=\bL.$ Since $\mathbf{e}^\bL$ has  a left inverse 
$\mathbf{f}_{\mathbf{i}}^\bL|_{\mathbf{B}_H}$, we conclude that
$\mathbf{e}^\bL$ is injective. \QED

The following corollary easily follows from the preceding
proposition.

\Cor \label{cor:main} Let $\ib=(i_k)_{k\ge1}$  be a good sequence
 in $I$ and let  $\bL=(l_k)_{k\ge1}$ be a good
sequence of non-negative integers. Denote by  $p_{\ib,\bL}\cl
V^{\ge\bL,\ib}\to V^{\ge\bL,\ib}/V^{>\bL,\ib}$ the canonical
projection and set $\B_{\ib,\bL}=\{b\in\B\mid\L(\ib,b)=\bL\}$.

 Then the  image
$p_{\ib,\bL}(\B_{\ib,\bL})$ is a basis of $V^{\ge
\bL,\,\ib}/V^{>\bL,\,\ib}$. Moreover, $\cor^\times
p_{\ib,\bL}(\B_{\ib,\bL})$ is equal to $\cor^\times
\bl p_{\ib,\bL}(f_\ib^{\bL}\B_H)\setminus\{0\}\br$. \encor

\noindent Here, for a subset $S$ of a $\cor$-vector space $V$, we
use the notation
$$\cor^\times S\seteq\{\cor^\times s\mid s\in S\}.$$

\medskip
Note that Proposition~\ref{prop:main} (iii) with $\bL=(0,0,\ldots)$
implies that
$$\sum_{i\in I}f_iV=\soplus_{b\in\B\setminus \B_H}
\cor b.$$
Hence we conclude
\Lemma \label{lem:main} Set $V_H\seteq
V/(\sum_{i\in I}f_iV)$ and let $p_H\cl V\epito V_H$ be the canonical
projection. Then $p_H\cl \B_H\to V_H$ is injective and $p_H(\B_H)$
is a basis of $V_H$. \enlemma

\vskip 5mm

\section{Uniqueness of dual perfect graphs} \label{Sec:uniqueness}
The purpose of this section is to prove that all the dual perfect
graphs of a given dual perfect space are isomorphic as abstract crystals.

\Th \label{Thm:uniqueness} Let $(V, \{f_i\}_{i\in I})$ be a dual
perfect space and let $\B$ and $\B'$ be its dual perfect bases.
Assume that $p_H(\B_H)=p_H(\B'_H)$.

Then there is a crystal isomorphism $\psi\cl \B\isoto \B'$ such that
$p_H(b)=p_H(\psi(b))$ for all $b\in\B_H$. Moreover, for any $b\in\B$
and a good sequence $\ib$ in $I$, we have $\L(\ib,
b)=\L(\ib,\psi(b))$ and $p_H(\e_{\ib}^\tp b)=p_H(\e_{\ib}^\tp
\psi(b)) \in p_H(\B_H)$. \entheorem

\Proof For a good sequence $\ib=(i_k)_{k\ge1}$ in $I$ and a good
sequence $\bL=(l_k)_{k\ge1}$, let $p_{\ib,\bL}\cl V^{\ge\bL,\ib}\to
V^{\ge\bL,\ib}/V^{>\bL,\ib}$ be the canonical projection. Set
$\B_{\ib,\bL}=\{b\in\B\mid\L(\ib,b)=\bL\}$ and define
$\B'_{\ib,\bL}$ in a similar manner.  Then by Corollary
\ref{cor:main} and Lemma \ref{lem:main}, we have
$$\cor^\times p_{\ib,\bL}(\B_{\ib,\bL})=\cor^\times p_{\ib,\bL}(\B'_{\ib,\bL})$$
and both $p_{\ib,\bL}(\B_{\ib,\bL})$ and
$p_{\ib,\bL}(\B'_{\ib,\bL})$ are bases of
$V^{\ge\bL,\ib}/V^{>\bL,\ib}$.
Hence for any $b\in \B$,  there exists $b'\in \B'$ such that \eq
&&\text{$\L(\ib,b)=\L(\ib,b')$ and $b-cb'\in V^{>\L(\ib,b),\ib}$ for
some $c\in \cor^\times$.} \label{eq:6.1} \eneq

To prove our claim, it is enough to show that for another choice of
a good sequence $\ib'$, \eqref{eq:6.1} holds with the same $b'$. Set
$v=b-cb'$. Since $v\in V^{>\L(\ib,b),\ib}$, $v$ is a linear
combination of $\B\setminus\{b\}$. Set $\bL=\L(\ib',b)$ and
$\bL'=\L(\ib',b')$. Then $b\in V^{\ge\bL,\ib'}$ and $b'\in
V^{\ge\bL',\ib'}$. Assume that $\bL\Lt \bL'$. Since $b'\in
V^{>\bL,\ib'}$, we have $v-b\in V^{>\bL,\ib'}$. Hence $v-b$ is a
linear combination of $\B\setminus\{b\}$, which is a contradiction to
the fact that $v$ is a linear combination of $\B\setminus\{b\}$.
Hence we conclude that $\bL'\Le\bL$. Similarly, we have
$\bL\Le\bL'$. Hence we obtain $\bL=\bL'$. It follows that both
$p_{\ib',\bL}(b)$ and $p_{\ib',\bL}(b')$ belong to $\cor^\times
p_{\ib',\bL}(\B_{\ib,\bL})$. If $\cor^\times
p_{\ib',\bL}(b)\not=\cor^\times p_{\ib',\bL}(b')$, then $v-b$ is a
linear combination of $\B\setminus\{b\}$, which is a contradiction. Hence
$\cor^\times p_{\ib',\bL}(b)=\cor^\times
p_{\ib',\bL}(b')$ and our assertion follows. \QED

\vskip 3mm

\begin{co} \label{cor:dpg}
Let $U_{q}(\g)$ be the quantum generalized Kac-Moody algebra
associated with a Borcherds-Cartan datum and let $V_q(\lambda)$ be
the irreducible highest weight $U_{q}(\g)$-module with $\lambda \in
P^{+}$. Then the following statements hold.

\bnum
\item Every dual perfect graph of $V_q(\lambda)$ is isomorphic to
$B(\lambda)$ as an abstract crystal.

\item Every dual perfect graph of $U_{q}^{-}(\g)$ is isomorphic to
$B(\infty)$ as an abstract crystal.

\ee
\end{co}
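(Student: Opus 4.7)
The plan is to use Theorem~\ref{Thm:uniqueness} as a black box: I will compare an arbitrary dual perfect basis $\B$ of $V_q(\lambda)$ (respectively, $U_q^-(\g)$) with the lower global basis $\B(\lambda)$ (respectively, $\B(\infty)$), which is already known to be a dual perfect basis by Proposition~\ref{prop:dual basis} (respectively, Proposition~\ref{prop:dual Binfty}), and then invoke Proposition~\ref{prop:dual graph} (respectively, Proposition~\ref{prop:dual Binfty}) to identify the resulting graph with $B(\lambda)$ (respectively, $B(\infty)$). The only thing that needs verification is the hypothesis $p_H(\B_H)=p_H(\B'_H)$ of Theorem~\ref{Thm:uniqueness}.

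I first claim that in both settings the quotient $V_H = V/\sum_{i\in I} f_i V$ is one-dimensional. For $V_q(\lambda)$, using $V_q(\lambda)=U_q^-(\g) v_\lambda$ together with the obvious decomposition $U_q^-(\g)=\cor\cdot 1 + \sum_{i} f_i U_q^-(\g)$, I get $V_q(\lambda)=\cor v_\lambda + \sum_i f_i V_q(\lambda)$, and the sum is direct because a weight comparison forces $v_\lambda \notin \sum_i f_i V_q(\lambda)$. The same weight argument shows $U_q^-(\g)=\cor\cdot 1 \oplus \sum_i f_i U_q^-(\g)$, since $1$ has weight $0$ while the right summand lives in strictly negative weights. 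By Lemma~\ref{lem:main}, $\B_H$ therefore consists of a single element in each case.

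The next step is a rescaling reduction. Replacing each $b\in\B$ by $\lambda_b b$ for any choice of $\lambda_b \in \cor^\times$ produces another dual perfect basis with canonically identified crystal data $\f_i,\e_i,\varepsilon_i,\varphi_i,\wt$: indeed the defining condition $f_i b - c\,\f_i b \in f_i^{\el_i(b)+2}V$ from Definition~\ref{Def: DPB}(b) is homogeneous in $b$ and $\f_i b$, so the constants $c$ simply get rescaled. Because $V_H$ is one-dimensional, I may multiply the unique element of $\B_H$ by a suitable nonzero scalar so that its image in $V_H$ agrees with that of the corresponding element of $\B(\lambda)_H$ (or $\B(\infty)_H$). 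This gives a dual perfect basis $\widetilde\B$ whose dual perfect graph is canonically isomorphic to that of $\B$ and which satisfies the hypothesis of Theorem~\ref{Thm:uniqueness}.

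Applying Theorem~\ref{Thm:uniqueness} to $\widetilde\B$ and $\B(\lambda)$ then produces a crystal isomorphism of dual perfect graphs, and composing with the isomorphism of Proposition~\ref{prop:dual graph} gives statement (i); statement (ii) is proved identically using $\B(\infty)$ and Proposition~\ref{prop:dual Binfty}. The argument is short modulo Theorem~\ref{Thm:uniqueness}; the only place where genuine input is needed is the one-dimensionality of $V_H$, and that is where the highest-weight structure of $V_q(\lambda)$ and the augmentation-type structure of $U_q^-(\g)$ enter the picture. No other obstacle is anticipated.
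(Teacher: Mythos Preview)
Your proposal is correct and follows exactly the route the paper intends: the corollary is stated without proof immediately after Theorem~\ref{Thm:uniqueness}, and the implicit argument is precisely the one you give---compare an arbitrary dual perfect basis with the lower global basis using Theorem~\ref{Thm:uniqueness}, then invoke Proposition~\ref{prop:dual graph} (respectively, Proposition~\ref{prop:dual Binfty}). Your verification that $V_H$ is one-dimensional and the rescaling step to force $p_H(\B_H)=p_H(\B'_H)$ are exactly the details one must supply, and both are handled correctly.
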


\vskip 3mm

\begin{rem} Theorem \ref{Thm:uniqueness} also shows that every
dual perfect graph of $V(\lambda)$ over a generalized Kac-Moody
algebra $\g$ is isomorphic to $B(\lambda)$ as an abstract crystal.
\end{rem}

\vskip 5mm

\section{perfect bases and dual perfect bases}

Now we will prove that the isomorphism classes of finitely generated
graded projective modules over $R$ and $R^{\lambda}$ form a dual
perfect basis of $\Q(q)\otimes_{\Q[q,q^{-1}]}[\mathrm{Proj}(R)]$ and
$\Q(q)\otimes_{\Q[q,q^{-1}]}[\mathrm{Proj}(R^{\lambda})]$, respectively. We first recall
the definition of {\em perfect basis}.

Let $V=\bigoplus_{\mu \in P} V_{\mu}$ be a $P$-graded  $\cor$-vector space 
with a family of endomorphisms $\{e_{i}\}_{i \in I}$ satisfying the
following conditions.
\begin{enumerate}[(i)]
\item There exist finitely many elements $\lambda_1, \ldots,
\lambda_k \in P $ such that $\mathrm{wt}(V) \subset \bigcup_{j=1}^k
(\lambda_j - Q^{+})$.
\item
$e_{i} (V_{\mu}) \subset V_{\mu + \alpha_i}$ for any $i\in I$ and
$\mu\in P$.
\end{enumerate}

 Set $\del_i(v):= \max\{n\in \Z_{\geq 0}\mid e_i^{n}v\not=0\}
=\min\{n\in \Z_{\geq 0}\mid e_i^{n+1}v=0\}$
for $v\in V\setminus\{0\}$. 

\vskip 3mm

\begin{defn} \label{Def:PB}
A basis $\mathbb{B}$ of $V$ is said to be {\em perfect} if
\begin{enumerate}
\item $\mathbb{B}=\bigoplus_{\mu \in P} \mathbb{B}_\mu$, where $\mathbb{B}_\mu:= \mathbb{B} \cap V_\mu,$
\item  for any $i\in I$,
there exists a map $\mathbf{E}_i\cl \BB\to \BB\cup\{0\}$
such that for any $b \in \mathbb{B}$, we have
\bna
\item if $\del_i(b)=0$, then $\mathbf{E}_i(b)=0$,
\item if $\del_i(b)>0$, then $\E_i (b)\in\BB$ and
$$e_i(b)- c \mathbf{E}_i(b)\in \ker e_i^{\,\del_i(b)-1}
\quad\text{for some $c \in \cor^\times$,}$$
\ee
\item if $\mathbf{E}_i (b)= \mathbf{E}_i (b') \neq 0$ for $b, b' \in \mathbb{B}$, then $b=b'.$
\end{enumerate}
\end{defn}
It was shown in \cite{KOP1} and \cite{KOP2} that every
$U_q(\g)$-module in the category ${\mathcal O}_{\mathrm{int}}$ and the
negative half $U_q^{-}(\g)$ have perfect bases.

\vskip 3mm

For a perfect basis $\BB$, we define a map
$\mathbf{F}_i\cl \mathbb{B}\to \mathbb{B} \cup \{0\}$
by
\begin{equation*}\mathbf{F}_i (b) = \left\{
\begin{array}{ll}
b' & \ \  \text{if $\mathbf{E}_i  (b')=b$,} \\
0  & \ \ \text{if $b\notin\E_i\BB$.}
\end{array}\right.
\end{equation*}
Let us recall the following lemma.
\Lemma[\cite{BK}]
Let $\BB$ be a perfect basis of $(V,\{e_i\}_{i\in I})$.
Then 
\bnum
\item for any $n\in\Z_{\ge0}$, we have
$$\ker e_i^n=\soplus_{b\in \BB,\;\del_i(b)<n}\cor b.$$
\item $\del_i(\E_ib)=\del_i(b)-1$ for any $b\in\BB$ such that
$\del_i(b)>0$.
\item for $b\in\BB$ and $i\in I$, we have
$b\in\E_i\BB$ if and only if $b\in e_iV+\ker e_i^{\del_i(b)}$.
\item for any $n\in\Z_{\ge0}$ and $i\in I$,
the image of $\{b\in \BB\mid \del_i(b)=n\}$
is a basis of $\ker e_i^{n+1}/\ker e_i^{n}$.
\ee
\enlemma

Let $(V,\{f_i\}_{i\in I})$ be a pre-dual perfect space
such that $\dim V_\la<\infty$ for any $\la\in P$.
We set
\eq
&&(V^\vee)_\la=\Hom_\cor(V_\la, \cor)
\quad\text{for any $\la\in P$, and}\quad
V^\vee=\soplus_{\la\in P}(V^\vee)_\la.
\eneq
Let $\lan\scbul,\scbul\ran: V\times V^\vee \to \cor$
be the canonical pairing.
We define $e_i\cl V^\vee \to V^\vee$ as the transpose of $f_i$,
so that we have
$\lan u, e_iv\ran=\lan f_iu,v\ran$ for any $u\in V$ and $v\in V^\vee$.

\Prop
Let $(V,\{f_i\}_{i\in I})$ be a  a pre-dual perfect space
such that $\dim V_\la<\infty$ for any $\la\in P$.
Let $\B$ be a basis of $V$ and let
$\B^\vee\subset V^\vee$ be the dual basis of $\B$.
Then
\bnum
\item
$\B$ is a dual perfect basis
if and only if $\B^\vee$ is a perfect basis.
\item
Assume that $\B$ is a dual perfect basis.
Denoting the canonical isomorphism
$\B\isoto \B^\vee$ by $ b\mapsto b^\vee$ ,
we have
\eqn
\el_i(b)=\del_i(b^\vee),\quad\text{and}\quad
(\e_ib)^\vee=\E_i(b^\vee)\quad\text{for all $b\in \B$ and $i\in I$.}
\eneqn
Here we understand $0^\vee=0$.
\ee
\enprop
\Proof
Assume first that $\B^\vee$ is a perfect basis.
Then, $\ker e_i^n=\soplus_{b^\vee \in \B^\vee ,\;\del(b^\vee)<n}\cor b^\vee$.
Since $f_i^nV$ coincides with the orthogonal complement
$(\ker e_i^n)^\perp\seteq\{u\in V\mid \lan u,\ker e_i^n\ran=0\}$
of $\ker e_i^n$,
we have
$f_i^nV=\soplus_{\del_i(b^\vee)\ge n}\cor b$.
Hence $b\in f_i^nV$ if and only if
$\del_i(b^\vee)\ge n$. Therefore we have
$\el_i(b)=\del_i(b^\vee)$, and
$$f_i^nV=\soplus_{\el_i(b)\ge n}\cor b.$$

We define $\f_i\cl \B\to\B\cup\{0\}$ by
$(\f_ib)^\vee=\F_i(b^\vee)$.
Let $b\in \B$ with $\el_i(b)=n$.
We shall show that
$f_ib-c\f_ib\in f_i^{n+2}V$ for some $c\in \cor^\times$.

Recall that
the image of $\B_n^\vee\seteq\set{b^\vee\in\B^\vee}{\del_i(b^\vee)=n}$ forms a basis of
$\ker e_i^{n+1}/\ker e_i^n$.
Since $\ker e_i^{n+1}/\ker e_i^n$ is isomorphic to
the dual
$(f_i^nV/f_i^{n+1}V)^\vee$ of $f_i^nV/f_i^{n+1}V$,
the image of $\B_n$ forms a basis of $f_i^nV/f_i^{n+1}V$ dual to $\B_n^\vee$.
By the hypothesis, the map
$e_i\cl \ker e_i^{n+2}/\ker e_i^{n+1}\to\ker e_i^{n+1}/\ker e_i^n$
gives an injection $$\E_i\cl\set{b^\vee \in \B_{n+1}^\vee}{\E_i(b^\vee)\not=0}
\to  \B_n^\vee.$$
Hence $f_i\cl f_i^nV/f_i^{n+1}V\to f_i^{n+1}V/f_i^{n+2}V$ sends
$\set{b\in\B_n}{\F_i(b^\vee)\not=0}$ to $\B_{n+1}$ up to a constant multiple,
and sends $\set{b\in\B_n}{\F_i(b^\vee)=0}$ to $\{0\}$.

As a conclusion, for any $b\in \B_{n}$, we have
$f_ib\equiv c\,\mathbf{f}_ib \mod f_i^{n+2}V$ for some $c\in\cor^\times$.
Thus $\B$ is a dual perfect basis.

The converse can be proved  in a similar manner. \QED
Recall that the Khovanov-Lauda-Rouquier algebra $R$ and its
cyclotomic quotient $R^{\lambda}$ provide a categorification of
$U_q(\g)$ and $V_q^{\lambda}$, respectively. That is, we have
\begin{equation*}
\begin{aligned}
& U_q^{-}(\g) \cong \Q(q)\otimes_{\Z[q,q^{-1}]}[\mathrm{Rep}(R)] \cong
\Q(q)\otimes_{\Z[q,q^{-1}]}[\mathrm{Proj}(R)], \\
& V_{q}(\lambda) \cong \Q(q)\otimes_{\Z[q,q^{-1}]}[\mathrm{Rep}(R^{\lambda})]\cong
\Q(q)\otimes_{\Z[q,q^{-1}]}[\mathrm{Proj}(R^{\lambda})].
\end{aligned}
\end{equation*}
It was shown in \cite{KOP2, LV} that the isomorphism classes of
finite-dimensional graded irreducible modules form a perfect basis
of $\Q(q)\otimes_{\Z[q,q^{-1}]}[\mathrm{Rep}(R)]$ and
$\Q(q)\otimes_{\Z[q,q^{-1}]}[\mathrm{Rep}(R^{\lambda})]$,
respectively. Since the isomorphism classes of finite-dimensional
graded irreducible $R$-modules and those of finitely generated
graded projective  indecomposable   $R$-modules are dual to
each other with respect to the perfect pairing given by
\begin{equation*}
\begin{aligned}
& [\mathrm{Proj}(R)] \times [\mathrm{Rep}(R)] \longrightarrow {\mathbb A}, \\
& \  ([P],  [M]) \longmapsto   \dim_q  \Hom_{R}(P, M),
\end{aligned}
\end{equation*}
the following proposition follows immediately.

\begin{prop} \label{Prop:Cat-infty}
 The isomorphism classes of finitely generated graded projective
indecomposable modules form a dual perfect basis of
$\Q(q)\otimes_{\Z[q,q^{-1}]}[\mathrm{Proj}(R)]$ and
$\Q(q)\otimes_{\Z[q,q^{-1}]}[\mathrm{Proj}(R^{\lambda})]$.
\end{prop}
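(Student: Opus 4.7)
The plan is to deduce Proposition~\ref{Prop:Cat-infty} as a direct application of the preceding proposition (perfect bases $\leftrightarrow$ dual perfect bases via duality). So I first set up the duality data and then invoke the proposition.

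First I would fix the pre-dual perfect space structures. Take $V=\Q(q)\otimes_{\Z[q,q^{-1}]}[\mathrm{Proj}(R)]$, graded by $P$ via the $Q^+$-grading on $R$, with endomorphisms $f_i$ induced by the induction functors on projective modules. On the dual side, take $V^\vee=\Q(q)\otimes_{\Z[q,q^{-1}]}[\mathrm{Rep}(R)]$ with endomorphisms $e_i$ induced by the corresponding restriction functors on finite-dimensional graded modules. Both inherit the weight space decomposition with finite-dimensional weight spaces, and one has the perfect Hom-pairing
\[
\lan [P],[M]\ran=\dim_q\Hom_R(P,M).
\]
The key bookkeeping point I need is the adjunction $\lan f_i[P],[M]\ran=\lan [P],e_i[M]\ran$, which follows from the Frobenius/adjoint relationship between the induction and restriction functors for KLR algebras; this identifies $V^\vee$ (as a $P$-graded space with $e_i$-action) with the transpose of $V$ in the sense of the previous section.

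Next, by the results recalled from \cite{KOP2,LV}, the isomorphism classes of finite-dimensional graded irreducible $R$-modules form a perfect basis $\mathbb{B}$ of $V^\vee$. By construction the dual basis of $\mathbb{B}$ under the Hom-pairing consists precisely of the isomorphism classes of finitely generated graded projective indecomposable $R$-modules (this is a standard fact about graded algebras with finite-dimensional simples: projective covers of simples are dual to the simples under $\dim_q\Hom$). Call this dual basis $\mathbf{B}$; it is a basis of $V$.

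Now the previous proposition applies: since $\mathbb{B}=\mathbf{B}^\vee$ is a perfect basis of $V^\vee=V^{\vee}$ with the transpose action, the basis $\mathbf{B}$ is a dual perfect basis of $V$. The same argument, with $R$ replaced by $R^\lambda$ throughout and Hom taken over $R^\lambda$, yields the statement for the cyclotomic quotient using the categorification $V_q(\lambda)\cong\Q(q)\otimes[\mathrm{Proj}(R^\lambda)]$ of~\cite{KK,KKO}.

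The main obstacle, as I see it, is purely the adjunction verification in the first paragraph: one must be careful to use the graded Hom pairing (not Ext) and to confirm that induction on $\mathrm{Proj}$ is the left adjoint of restriction on $\mathrm{Rep}$ at the level of Grothendieck groups, so that the induced actions $f_i$ on $V$ and $e_i$ on $V^\vee$ really are transposes. Once this is in place, everything else is formal: the previous proposition does the work, and the identification of the dual basis of $\mathbb{B}$ with the classes of projective indecomposables is standard.
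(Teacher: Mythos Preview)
Your proposal is correct and matches the paper's approach exactly: the paper deduces the proposition from the preceding duality proposition together with the Hom-pairing under which the simples and projective indecomposables are dual bases, citing \cite{KOP2,LV} for the perfect basis on the $\mathrm{Rep}$ side. Your version is in fact more explicit than the paper's (which simply says the result ``follows immediately''), and your identification of the induction/restriction adjunction as the one point needing care is apt, though the paper leaves this implicit.
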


\end{document}